\newtheorem{theorem}{Theorem}[section]
\newtheorem{lemma}{Lemma}[section]
\newtheorem{remark}{Remark}[section]
\newtheorem{definition}{Definition}[section]
\newtheorem{proposition}{Proposition}[section]
\newtheorem{corollary}[theorem]{Corollary}
\newcommand{\bv}{\mathbf{v}}
\newcommand{\ba}{\mathbf{a}}
\newcommand{\be}{\mathbf{e}}
\newcommand{\w}{\omega}
\newcommand{\W}{\Omega}
\newcommand{\cC}{\mathcal{C}}
\newcommand{\eps}{\varepsilon}
\newcommand{\uNe}{\underline{N}^{\epsilon}}
\title{Intrinsic Complexity And Scaling Laws: \\ From Random Fields to Random Vectors }
\author{Jennifer Bryson}
\address{Department of Mathematics, University of California, Irvine, CA 92697.}
\email{jabryson@math.uci.edu} 
\author{Hongkai Zhao}
\address{Department of Mathematics, University of California, Irvine, CA 92697.}
\email{zhao@math.uci.edu}
\thanks{The work of Hongkai Zhao is partially supported by NSF grant DMS-1418422 and DMS-1622490.} 
\author{Yimin Zhong}
\address{Department of Mathematics, University of California, Irvine, CA 92697.}
\email{yiminz@math.uci.edu}
\begin{document}

\begin{abstract}
Random fields are commonly used for modeling of spatially (or timely) dependent stochastic processes.  In this study, we provide a characterization of the intrinsic complexity of a random field in terms of its second order statistics, e.g., the covariance function,  based on the Karhumen-Lo\'{e}ve expansion. We then show scaling laws for the intrinsic complexity of a random field in terms of the correlation length as it goes to 0. 
In the discrete setting, it becomes approximate embeddings of a set of random vectors. We provide a precise scaling law when the random vectors have independent and identically distributed entires using random matrix theory as well as when the random vectors has a specific covariance structure.
\end{abstract}
\maketitle
\section{Introduction}
In this study, we denote $a(x,\w)$ a random field, where $\w$ belongs to a probability space $(\W, \Sigma,P)$ and $x$ belongs to a compact domain $D\subseteq R^d$. In other words, we can view the $P$-measureable map: $a(\cdot,\w): \W\rightarrow L^{\infty}(D)$, a set of random functions on $D$ parametrized by $\w \in \W $, or view $a(x,\cdot)$ a set of random variables in $L^2(\W, dP)$ parametrized by the spatial position $x\in D$. 

A useful separable finite approximation for $a(x,\w)$ in practice is an expansion by separating deterministic and stochastic variables of the form
\begin{equation}
\label{eq:1}
a_N(x,\w)=\sum_{n=1}^{N}\phi_n(x)Y_n(\w),
\end{equation}
where $\phi_n(x)$ ($Y_n(w)$) are some functions (random variables), e.g., basis in $L^2(D))$ ($L^2(\W,dP)$). A basic but important question for this separable approximation is: what is the minimum number of terms needed in the expansion (by choosing the appropriate $\phi_n, Y_n$) in order to approximate the true random field $a(x,\w)$ to a given tolerance. From another point of view, if we regard $a(x,\w)$ as a set of random variables (functions) parametrized by $x\in D$ ($\w\in \W$) , the question becomes: what is the least dimension of a linear space that can approximate this set of random variables (functions) to a given tolerance. Accordingly, $Y_n(\w)$ ($\phi_n(x)$), forms the basis of the linear space. This is the dual to the definition of Kolmogorov $n$-width
\footnote{Kolmogorov $n$-width of a set $S$ in a normed space $W$ is its distance to the best $n$ dimensional linear subspace $L_n$:
\[
d_n(S, W) :=\inf_{L_n}\sup_{f\in S}\inf_{g\in L_n} \|f-g\|_W, 
\]
} \cite{kolmogoroff1936uber}
which characterizes the intrinsic complexity or information content  for this family of random variables (functions).
In this work, we will address these questions for a random field $a(x,\w)$ based on the Karhumen-Lo\'{e}ve (KL) approximation. In particular, if there is a length scale, i.e., the correlation length, that characterizes the range of interaction among the family of spatially distributed random variables, we show both upper bound and lower bound of a scaling law for the number of terms needed in the KL approximation in terms of the correlation length. In the discrete setting, it can be formulated as approximate embeddings of a set of random vectors. We provide more precise scaling laws for a few special cases.


Here is the outline of the paper. We first introduce the mathematical formulation of the KL expansion in Section \ref{sec:formulation}. Main results and scaling laws for random fields are presented in Section \ref{sec:RF} evidenced by numerical experiments. Discrete formulation as random vectors embedding and results are presented in Section \ref{sec:embedding}.

\section{Mathematical Formulation}
\label{sec:formulation}
We denote the norm $L^2(D\times \W)$ by $\|\cdot\|_2$ and assume $\|a\|_2<\infty$. The mean field, denoted by $E_a(x)$,  and covariance, denoted by $C_a(x,y)\in D\times D$ of $a(x,\w)$ are defined as 
\begin{equation}
\label{eq:mean}
E_a(x)=\int_{\W}a(x,\w)dP(\w), \quad C_a(x,y)=\int_{\W}[a(x,\w)-E_a(x)][a(y,\w)-E_a(y)]dP(\w). 
\end{equation}
$C_a(x,y)$ can be associated with a compact, self-adjoint and non-negative operator $\cC_a: L^2(D)\rightarrow L^2(D)$ via
\begin{equation}
\label{eq:operator}
(\cC_a u)(x)=\int_D C_a(x,y)u(y)dy, \quad \forall u\in L^2 (D)
\end{equation}
Let $(\lambda_n, e_n(x)), n=1, 2, \ldots$ be the sequence of eigen-pairs associated with $\cC_a$, with $\lambda_1\ge \lambda_2 \ge \ldots \ge \lambda_n \rightarrow 0$ as $n\rightarrow \infty$ and $e_n(x)$ forming an orthonormal basis of $L^2(D)$. Then the KL expansion of the random field $a(x,w)$ is
\begin{equation}
\label{eq:KL}
a(x,w)=E_a(x)+ \sum_{n=1}^{\infty}\sqrt{\lambda_n}e_n(x)Y_n(\w),
\end{equation}
where $Y_n(\w)$ are centered at 0, normalized and pairwise uncorrelated random variables satisfying
\begin{equation}
Y_n(\w)=\frac{1}{\sqrt\lambda_n}\int_D (a(x,\w)-E_a(x))e_n(x)dx, \quad E[Y_n]=0, \quad E[Y_mY_n]=\delta_{mn}.
\end{equation}
For simplicity and without loss of generality, we assume the random field $a(x,\w)$ is centered, i.e.,  $E_a(x)=0$, from now on. We have 
\begin{equation}
\label{eq:trace}
\sum_{n=1}^{\infty}\lambda_n=\|a\|^2_2=\int_D C_a(x,x) dx.
\end{equation}
Moreover, the N-term truncated KL expansion is the best N-term separable approximation of $a(x,\w)$ in $L^2(D\times \W)$. Denote $H=L^2(D), S=L^2(\W,dP)$, we have 
\begin{equation}
\label{eq:truncated}
\|a-\sum_{n=1}^{N}\sqrt{\lambda_n}e_n(x)Y_n(\w)\|^2_2=\inf_{V\subset H, \dim V=N} \|a-P_{V\otimes S}a\|^2_2=\sum_{n=N+1}^{\infty}\lambda_n,
\end{equation}
where $P_{V\otimes S}a$ denotes the $L^2$ projection of $a$ in $V\otimes S$.

\begin{remark}
For a centered discrete and finite process, $\ba_n, n=1, 2, \ldots, N$, ($\lambda_n, \be_n$) are the eigen-pairs of the covariance matrix $\Sigma_{mn}=E[\ba_m\ba_n]$.
\end{remark}

Since the truncated KL expansion \eqref{eq:truncated} is the best separable approximation of a random field in the metric $L^2(D\times \W)$, we introduce the following two characterizations of $\lambda_n$. 

\begin{definition}
\label{def:def1}
Given $\epsilon >0$, $\overline{N}^{\epsilon} =\max_{1\le n < \infty} n, ~s.t.~ \sqrt{\lambda_n}\ge \epsilon$.
\end{definition}
Since $ \overline{N}^{\epsilon}$ is the largest $n$ such that $\sqrt{\lambda_n}\ge \epsilon$, it means that adding one more term to the $\overline{N}^{\epsilon}$-term KL expansion will not reduce the approximation error more than $\epsilon$.  Using $\sum_{n=1}^{\overline{N}^{\epsilon}}\sqrt{\lambda_n}e_n(x)Y_n(\w)$ to approximate $a(x,\omega)$ is analogous to an $\epsilon$-rank approximation to a matrix. 

\begin{definition}
\label{def:def2}
 Given $1\ge \epsilon >0$, $\underline{N}^{\epsilon}=\min n, ~s.t.~  \sum_{m=n+1}^{\infty}\lambda_m\le \epsilon^2 \sum_{m=1}^{\infty}\lambda_m$. 
\end{definition}
From \eqref{eq:trace} and \eqref{eq:truncated} we have 
\begin{equation}
\label{eq:quotient}
\frac{\|a-\sum_{n=1}^{\underline{N}^{\epsilon}}\sqrt{\lambda_n}e_n(x)Y_n(\w)\|_2}{\|a\|_2}\le \epsilon.
\end{equation}
Hence $\underline{N}^{\epsilon}$ is the minimal number of terms in the truncated KL expansion to approximate the random field $a(x,\w)$ with a relative root mean square error of $\epsilon$ or to reach ($1-\epsilon^2)$ of the total variance. 
Since $N$-term truncated KL expansion is the best $N$-term separable approximation for a random field $a(x,\w)$ in $L^2(D\times\Omega)$, 
$\underline{N}^{\epsilon}$ is the minimum number of terms needed in a separable approximation to achieve a relative error $\epsilon$. In other words, if $V\subset L^2(D)$ is linear space and $\frac{\|a-P_{V\otimes S}a\|_2}{\|a\|_2}\le \epsilon$ then $\dim V\ge \underline{N}^{\epsilon}$. Actually $span\{e_n, n=1, 2, \ldots \underline{N}^{\epsilon}\}\subset L^2(D)$ is the best linear space of dimension $\underline{N}^{\epsilon}$ to approximates $a(x,\w)$.  Due to the normalization, $\underline{N}^{\epsilon}$ is invariant under a constant scaling of $a(x,\w)$.

\section{Intrinsic Complexity of a Random Field and Scaling Laws.}
\label{sec:RF}
\subsection{A General Lower Bound and its Scaling Law}
In this Section, we show a general lower bound for $\underline{N}^{\epsilon}$ in terms of the second order statistics, i.e., the covariance function, for a random field. Then use the correlation length to derive a scaling law in terms of the correlation length. 

Define 
\begin{equation}
\label{eq:square}
C_a^T C_a(x,y)=\int_D C_a(x,z) C_a(z,y) dz, 
\end{equation}
which can be associated with the following compact, self-adjoint and non-negative operator
\begin{equation}
( \cC_a^T\cC_a u)(x)= \iint_{D\times D} C_a(x,z) C_a(y,z) u(y) dydz, \quad \forall u\in L^2(D)
\end{equation}
whose eigen-pairs are $(\lambda_n^2, e_n(x)), n=1, 2, \ldots$. Also we have
\begin{equation}
\sum_{n=1}^{\infty}\lambda_n^2=\int_D C_a^T C_a(x,x)dx=\iint_{D\times D} C_a^2(x,y)dxdy.
\end{equation}

\begin{theorem}
\label{th:lb}
\begin{equation}
\label{eq:lb}
 \underline{N}^{\epsilon} \ge (1-\epsilon^2)^2\frac{\left(\int_D C_a(x,x) dx\right)^2}{\iint_{D\times D} C_a^2(x,y)dxdy} =(1-\epsilon^2)^2\frac{\|a\|_2^4}{\iint_{D\times D} C_a^2(x,y)dxdy}
\end{equation}

\begin{equation}
\overline{N}^{\epsilon}\le \epsilon^{-4}\iint_{D\times D} C_a^2(x,y)dxdy.
\end{equation}

\end{theorem}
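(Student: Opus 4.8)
The plan is to treat the two bounds separately, in each case leveraging the two trace identities already recorded in the excerpt, namely $\sum_{n=1}^{\infty}\lambda_n=\|a\|_2^2=\int_D C_a(x,x)\,dx$ and $\sum_{n=1}^{\infty}\lambda_n^2=\iint_{D\times D}C_a^2(x,y)\,dx\,dy$, together with the ordering $\lambda_1\ge\lambda_2\ge\cdots\ge 0$.

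For the upper bound on $\overline{N}^{\epsilon}$, I would read off the definition directly: for every index $n\le\overline{N}^{\epsilon}$ we have $\sqrt{\lambda_n}\ge\epsilon$, hence $\lambda_n^2\ge\epsilon^4$. Summing these $\overline{N}^{\epsilon}$ lower bounds and then bounding the resulting partial sum of squares by the full sum of squares gives
\[
\epsilon^4\,\overline{N}^{\epsilon}\le\sum_{n=1}^{\overline{N}^{\epsilon}}\lambda_n^2\le\sum_{n=1}^{\infty}\lambda_n^2=\iint_{D\times D}C_a^2(x,y)\,dx\,dy,
\]
and dividing by $\epsilon^4$ yields the claim. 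This step is essentially immediate and uses only monotonicity plus the second trace identity.

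For the lower bound on $\underline{N}^{\epsilon}$, set $N=\underline{N}^{\epsilon}$. The defining inequality $\sum_{m=N+1}^{\infty}\lambda_m\le\epsilon^2\sum_{m=1}^{\infty}\lambda_m$ is equivalent to a lower bound on the head of the spectrum, $\sum_{m=1}^{N}\lambda_m\ge(1-\epsilon^2)\sum_{m=1}^{\infty}\lambda_m=(1-\epsilon^2)\|a\|_2^2$. The key step is then to control this same partial sum from above in terms of $N$ and the sum of squares by applying Cauchy--Schwarz to the vector $(\lambda_1,\dots,\lambda_N)$ against the all-ones vector:
\[
\sum_{m=1}^{N}\lambda_m\le\sqrt{N}\left(\sum_{m=1}^{N}\lambda_m^2\right)^{1/2}\le\sqrt{N}\left(\iint_{D\times D}C_a^2(x,y)\,dx\,dy\right)^{1/2}.
\]
Chaining the two inequalities, squaring, and solving for $N$ then produces exactly the stated lower bound.

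The computations are short, so I do not anticipate a genuine obstacle; the only real idea is recognizing that Cauchy--Schwarz is what converts the $L^1$-type control on the spectrum (coming directly from the definition of $\underline{N}^{\epsilon}$) into the $L^2$-type quantity $\iint C_a^2$, with both trace identities supplying the two sides for free. The one point I would double-check carefully is the translation of the ``minimum'' in Definition \ref{def:def2} into the head-sum inequality at the correct index $n=N$, so that the bound is asserted for $\underline{N}^{\epsilon}$ itself rather than $\underline{N}^{\epsilon}-1$.
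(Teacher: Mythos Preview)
Your proposal is correct and mirrors the paper's own proof essentially line for line: the paper uses the head-sum inequality $\sum_{m=1}^{\underline{N}^{\epsilon}}\lambda_m\ge(1-\epsilon^2)\int_D C_a(x,x)\,dx$ followed by the same Cauchy--Schwarz step, and for $\overline{N}^{\epsilon}$ it likewise bounds $\sum_{n=1}^{\overline{N}^{\epsilon}}\lambda_n^2\ge\overline{N}^{\epsilon}\epsilon^4$. Your caution about the index is appropriate but harmless here, since by minimality the defining inequality holds at $n=\underline{N}^{\epsilon}$.
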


\begin{proof}

Since $\overline{N}^{\epsilon}=\max  n, ~s.t. ~\sqrt{\lambda_n} \ge \epsilon$ and $ \underline{N}^{\epsilon}=\min n, ~s.t.~ \sum_{m=n+1}^{\infty}\lambda_m \le \epsilon^2 \sum_{m=1}^{\infty}\lambda_m= \epsilon^2\int_D C_a(x,x) dx $, we have
\[
\sum_{m=1}^{ \underline{N}^{\epsilon}}\lambda_m \ge (1- \epsilon^2) \sum_{m=1}^{\infty}\lambda_m= (1-\epsilon^2)\int_D C_a(x,x) dx.
\]
and 
\begin{equation}
\label{eq:CS}
(1-\epsilon^2)^2\left(\int_D C_a(x,x) dx\right)^2\le (\sum_{n=1}^{ \underline{N}^{\epsilon}}\lambda_n )^2\le  \underline{N}^{\epsilon}  (\sum_{n=1}^{\underline{N}^{\epsilon}}\lambda_n^2) \le \underline{N}^{\epsilon}  (\sum_{n=1}^{\infty}\lambda_n^2)=\underline{N}^{\epsilon} \iint_{D\times D} C_a^2(x,y)dxdy,
\end{equation}
and 
\begin{equation}
\label{eq:upper}
\iint_{D\times D} C_a^2(x,y)dxdy = \sum_{n=1}^{\infty}\lambda_n^2 \ge \sum_{n=1}^{\overline{N}^{\epsilon}}\lambda_n^2 \ge \overline{N}^{\epsilon} \epsilon^4.
\end{equation}
Hence we get 
\begin{equation}
\label{eq:lower}
\underline{N}^{\epsilon} \ge (1-\epsilon^2)^2\frac{\left(\int_D C_a(x,x) dx\right)^2}{\iint_{D\times D} C_a^2(x,y)dxdy}
\end{equation}
and 
\begin{equation}
\overline{N}^{\epsilon}\le \epsilon^{-4}\iint_{D\times D} C_a^2(x,y)dxdy.
\end{equation}

\end{proof}

\begin{remark}
For a centered discrete and finite process, $\ba_n, n=1, 2, \ldots, N$, with covariance matrix $\Sigma_{mn}=E[\ba_m\ba_n]$, $C_a^T C_a(x,y)$ becomes $\Sigma^T\Sigma$ and $\iint_{D\times D} C_a^2(x,y)dxdy$ becomes $trace(\Sigma^T\Sigma)$.
\end{remark}

If the random field $a(x,\w)$ is stationary and the covariance function is of the form $C_a (x,y)=f(\frac{x-y}{\sigma})$, 
e.g., a Gaussian process with the Gaussian covariance kernel $C_a(x,y)=\exp(-\frac{|x-y|^2}{\sigma^2})$,
 where $\sigma$ introduces a length scale, 
 i.e., the correlation length, we show a scaling law for the lower bound for $\underline{N}^{\epsilon}$ in Theorem \ref{th:lb} in term of $\sigma$. In other words, with the correlation length $\sigma$, the intrinsic degrees of freedom for the random field with spatial variable defined in a $d$-dimensional bounded domain is at least of order $O(\sigma^{-d})$. We will provide upper bounds and their scaling laws based on regularity/smoothness of the covariance function. In particular, if the covariance function is analytic, the scaling law for both the lower bound and the upper bound is sharp. Later we will  
 present numerical experiments to verify the sharpness of our estimates for a few popular models for random fields. 

 \begin{theorem}
 \label{th:l-scaling}
 For a stationary random field $a(x,\w), ~x\in D\subset \mathbb{R}^d$, $D$ compact, with covariance function $C_a (x,y)=f(\frac{x-y}{\sigma})$, and $\int_{\mathbb{R}^d}f^2(x)dx=I<\infty$, then $\exists c(D, f, \epsilon) >0$ such that
 \begin{equation}
 \label{eq:l-scaling}
 \underline{N}^{\epsilon} \ge c(D,f, \epsilon)\sigma^{-d}, \quad \mbox{as } \sigma \rightarrow 0.
 \end{equation}
 \end{theorem}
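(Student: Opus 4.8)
The plan is to derive the scaling law directly from the general lower bound in Theorem \ref{th:lb}, estimating its numerator and denominator separately as functions of $\sigma$. Writing $|D|$ for the Lebesgue measure of $D$, I first note that stationarity trivializes the numerator: since $C_a(x,x)=f(0)$ is constant in $x$, we get $\int_D C_a(x,x)\,dx = f(0)|D|$, so the numerator $\left(\int_D C_a(x,x)\,dx\right)^2 = f(0)^2|D|^2$ is independent of $\sigma$. (This uses the nondegeneracy $f(0)>0$; otherwise the field is identically zero and the statement is vacuous.)

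Next I bound the denominator from above. The key step is to enlarge the domain of the inner integral from $D$ to all of $\mathbb{R}^d$, which is legitimate because $f^2\ge 0$, and then to rescale:
\begin{equation}
\iint_{D\times D} f^2\!\left(\tfrac{x-y}{\sigma}\right) dx\,dy \le \int_D \left( \int_{\mathbb{R}^d} f^2\!\left(\tfrac{x-y}{\sigma}\right) dy \right) dx = \int_D \sigma^d I \, dx = |D|\, I\, \sigma^d,
\end{equation}
where in the middle equality I substituted $z=(x-y)/\sigma$ in the inner integral, so that $dy=\sigma^d\,dz$ and $\int_{\mathbb{R}^d} f^2(z)\,dz = I$. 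Thus the denominator is exactly $O(\sigma^d)$ with constant $|D|\,I$.

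Combining the two estimates with the lower bound of Theorem \ref{th:lb} yields
\begin{equation}
\underline{N}^{\epsilon} \ge (1-\epsilon^2)^2 \frac{f(0)^2 |D|^2}{|D|\, I\, \sigma^d} = (1-\epsilon^2)^2 \frac{f(0)^2 |D|}{I}\,\sigma^{-d},
\end{equation}
so the claim holds with $c(D,f,\epsilon) = (1-\epsilon^2)^2 f(0)^2 |D| / I$, a positive constant independent of $\sigma$.

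I do not expect a serious obstacle, since the conclusion is essentially immediate from Theorem \ref{th:lb} once the covariance is rescaled. The only points needing a little care are the nondegeneracy $f(0)>0$, which keeps the constant strictly positive, and the monotone enlargement of the integration domain, which relies on $f^2\ge 0$ together with the hypothesis $I<\infty$ (i.e.\ $f\in L^2(\mathbb{R}^d)$) so that the bounding integral is finite and scales precisely like $\sigma^d$. It is worth emphasizing that this argument delivers only the lower bound; the matching upper bounds alluded to in the surrounding text require additional smoothness or decay assumptions on $f$.
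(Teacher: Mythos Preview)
Your proof is correct and follows essentially the same route as the paper: apply Theorem~\ref{th:lb}, evaluate the numerator as $f(0)^2|D|^2$, and bound the denominator by $O(\sigma^d)$ via a rescaling. The only cosmetic difference is that the paper bounds the double integral through the change of variables $\xi=x-y$, $\eta=x+y$, obtaining $\iint_{D\times D} f^2\!\left(\tfrac{x-y}{\sigma}\right)dx\,dy \le \frac{I|S|}{2^d}\sigma^d$ with $S=D+D$, whereas you simply enlarge the inner integration to $\mathbb{R}^d$ and get $|D|\,I\,\sigma^d$; since $|D+D|\ge 2^d|D|$ by Brunn--Minkowski, your constant is in fact at least as sharp.
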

 \begin{proof}
 Let $S=D+D$ and $|D|$, $|S|$ denote the volume of $D$ and $S$ respectively. 
\begin{equation}
\iint_{D\times D} C_a^2(x,y)dxdy=\iint_{D\times D} f^2(\frac{x-y}{\sigma}) dxdy \underset{\eta=x+y}{\overset{\xi=x-y}{=\joinrel=}}
\frac{1}{2^d}\iint_{D_{\eta,\xi}}f^2(\frac{\xi}{\sigma})d\xi d\eta \le \frac{I|S|}{2^d}\sigma^{d}.
\end{equation}
 From \eqref{eq:lb}, one has 
 \begin{equation}
  \underline{N}^{\epsilon} \ge (1-\epsilon^2)^2 \frac{2^d f^2(0)|D|^2}{I|S|}\sigma^{-d}.
 \end{equation}

 \end{proof}

\begin{remark}
The sharpness of the lower bound estimate for $\underline{N}^{\epsilon}$ depends on the sharpness of the Cauchy-Schwartz inequality used in \eqref{eq:CS}. If the top $O(\sigma^{-d})$ singular values are about the same order, then this lower bound is sharp. Heuristically, by a scaling argument, the degrees of freedom of a random field $a(x,\omega)$ parametrized by $x$ in a bounded domain $D\subset R^d$ should grow as $O(\sigma^{-d})$ as $\sigma \rightarrow 0$ since the random field decorrelates at the length scale $\sigma$. In the work \cite{engquist11approximate}, a similar lower bound for the approximate separability of the Green's function of the high frequency wave fields was proved and its scaling law in terms of the wavelength was derived. The physical explanation is that two Green's functions with sources separated by more than a wavelength become decorrelated. The estimate was shown to be sharp in many situations. 
\end{remark}

\subsection{Upper Bound and Scaling Law Using Smoothness}
For a given random field (or a two variable function in general), an upper bound for the number of terms needed in a separable approximation with tolerance $\epsilon$ as $\epsilon\rightarrow 0$ is also very useful. If it grows slowly, e.g., polylog of $\epsilon^{-1}$, it implies the existence of low rank approximation once the random field (or function) is discretized. The upper bound are usually shown by choosing an appropriate basis. For example, polynomial basis and Taylor expansion is often used for highly separable approximation based on analyticity. This property are constantly explored to develop fast computation algorithms for matrix vector multiplication, e.g., fast multipole method \cite{greengard1987fast,greengard1988rapid, rokhlin1990rapid}, butterfly method \cite{candes2009fast, michielssen1996multilevel}, and direct or structured inverse of a matrix, e.g., HSS, H-matrix, \cite{bebendorf2003existence, borm2010approximation,ho2016hierarchical, martinsson2009fast, schmitz2012fast, xia2013efficient}. 
In a recent work \cite{ambikasaran2014fast}, fast direct methods for Gaussian processes were developed using the fact that for the most commonly used
covariance functions, the corresponding discrete covariance matrix
can be hierarchically factored into a product of block low-rank updates of the identity matrix.  In \cite{udell2017nice}  log-rank approximation of a matrices whose entries are piece-wise analytic functions of certain latent variables were shown. Here we will provide upper bounds and scaling laws for the KL expansion of a random field based on 
various smoothness (or regularity) assumptions of the covariance function, $C_a(x,y)$. The result is based on a decay rate estimates of the eigenvalues of the self-adjoint, non-negative operator \eqref{eq:operator}. To make the explanation more self-contained, we will first quote a few results from \cite{schwab2006karhunen}. 

\begin{lemma}[Lemma 2.16 \cite{schwab2006karhunen}]
\label{le:1}
Let $H$ be a Hilbert space and $\mathcal{C}:H\to H$ be a symmetric, non-negative and compact operator whose eigen-pair sequence is $(\lambda_m,\phi_m)_{m\ge 1}$. if $\mathcal{C}_m$ is an operator of rank at most $m$, then 
\begin{equation}
\lambda_{m+1}\le \|\mathcal{C}-\mathcal{C}_m\|_{B(H)}
\end{equation}
where $\|\cdot \|_{B(H)}$ denotes the induced norm of $H$. 
\end{lemma}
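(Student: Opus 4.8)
The plan is to prove this as the operator analogue of the Eckart--Young--Mirsky theorem: the best approximation error attainable by a rank-$m$ operator is controlled from below by the $(m+1)$-th eigenvalue. The natural route is a dimension-counting argument that produces a single test vector on which $\mathcal{C}_m$ vanishes while $\mathcal{C}$ does not contract below $\lambda_{m+1}$.

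First I would introduce the $(m+1)$-dimensional subspace $W=\operatorname{span}\{\phi_1,\ldots,\phi_{m+1}\}$ spanned by the top $m+1$ eigenvectors of $\mathcal{C}$. Restricting $\mathcal{C}_m$ to $W$ gives a linear map $\mathcal{C}_m|_W$ whose rank is at most $\operatorname{rank}(\mathcal{C}_m)\le m$. Since $\dim W=m+1$, the rank--nullity theorem applied on the finite-dimensional domain $W$ forces $\dim\ker(\mathcal{C}_m|_W)\ge (m+1)-m=1$. Hence there is a unit vector $x\in W$ with $\mathcal{C}_m x=0$.

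Next I would evaluate the error operator on this vector. Because $\mathcal{C}_m x=0$, we have $(\mathcal{C}-\mathcal{C}_m)x=\mathcal{C}x$. Writing $x=\sum_{k=1}^{m+1}c_k\phi_k$ with $\sum_k |c_k|^2=1$, and using $\mathcal{C}\phi_k=\lambda_k\phi_k$ together with orthonormality of the $\phi_k$, I obtain $\|\mathcal{C}x\|^2=\sum_{k=1}^{m+1}\lambda_k^2|c_k|^2\ge \lambda_{m+1}^2\sum_{k=1}^{m+1}|c_k|^2=\lambda_{m+1}^2$, where the inequality uses the ordering $\lambda_1\ge\cdots\ge\lambda_{m+1}\ge 0$. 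It then follows that $\|\mathcal{C}-\mathcal{C}_m\|_{B(H)}\ge\|(\mathcal{C}-\mathcal{C}_m)x\|=\|\mathcal{C}x\|\ge\lambda_{m+1}$, which is exactly the asserted bound.

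The only step requiring care --- the main obstacle, such as it is --- is the dimension count in the infinite-dimensional setting: one must argue that a rank-$m$ operator cannot be injective on an $(m+1)$-dimensional subspace. This becomes immediate upon restricting to $W$, since rank and nullity there are governed by the finite-dimensional rank--nullity theorem irrespective of the ambient infinite dimension. Everything else is a direct computation; note that non-negativity of $\mathcal{C}$ enters only to guarantee $\lambda_{m+1}\ge 0$, so that the resulting lower bound is nontrivial and that eigenvalues coincide with singular values.
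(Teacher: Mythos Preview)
Your argument is correct and is the standard proof of this operator-theoretic Eckart--Young/Schmidt approximation lower bound: restrict to the span of the top $m+1$ eigenvectors, use rank--nullity to find a unit vector annihilated by $\mathcal{C}_m$, and bound $\|\mathcal{C}x\|$ from below by $\lambda_{m+1}$ via the eigenvalue ordering. Note, however, that the paper does not actually prove this lemma; it merely quotes it from \cite{schwab2006karhunen} (``we will first quote a few results\ldots''), so there is no in-paper proof to compare against. Your proof supplies exactly what the paper omits, and it matches the classical argument one would expect in the cited reference.
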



Based on the regularity of the covariance function, one can design projection to appropriate finite element spaces composed of piecewise polynomial basis to approximate $\mathcal{C}$ by a finite rank operator. 

\begin{proposition}[Proposition 2.17  \cite{schwab2006karhunen}]
\label{prop:proj}
Let $S_h^p$ denote the space of discontinuous, piecewise polynomial functions of degree less than $p>0$ on a quasi-uniform triangulation $\mathcal{T}_h$ of $D$ with mesh size $h>0$ and denote by $n=dim S_h^p=O(h^{-d})$ its dimension. Denote by $\mathcal{P}_h: L^2(D)\rightarrow S_h^p(D)$ the $L^2(D)$ projection. As $h/p\rightarrow 0$, for any $f \in H^p(D)$ it holds
\begin{equation}
\label{eq:Hp}
\|f-\mathcal{P}_hf\|_{L^2(D)}\le C(p)n^{-\frac{p}{d}}
\end{equation}
and, if $f$ is analytic, there are constants $c, C >0$ such that, as $p\rightarrow \infty$ on a fixed triangulation $\mathcal{T}_h$ of $D$,
\begin{equation}
\|f-\mathcal{P}_hf\|_{L^2(D)}\le C\exp(-cn^{\frac{1}{d}}).
\end{equation}
\end{proposition}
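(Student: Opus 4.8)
The plan is to exploit the defining optimality of the $L^2$ projection: since $\mathcal{P}_h f$ is the orthogonal projection onto $S_h^p(D)$, for every $g\in S_h^p(D)$ one has $\|f-\mathcal{P}_hf\|_{L^2(D)}\le\|f-g\|_{L^2(D)}$, so it suffices to exhibit a single convenient piecewise-polynomial approximant $g$ and bound $\|f-g\|_{L^2(D)}$. Because $S_h^p$ consists of \emph{discontinuous} piecewise polynomials, $g$ can be built elementwise: on each cell $K\in\mathcal{T}_h$ take $g|_K$ to be the local best polynomial approximation of degree $<p$, so that $\|f-g\|_{L^2(D)}^2=\sum_{K}\|f-g\|_{L^2(K)}^2$ decouples into independent local problems. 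The standard device is to pull each $K$ back to a fixed reference simplex $\hat K$ by an affine map; quasi-uniformity controls the Jacobians and their inverses by powers of $h$ uniformly in $K$, so every constant arising on $\hat K$ transfers to $K$ with explicit $h$-scaling.

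For the algebraic estimate I would invoke the Bramble--Hilbert (Deny--Lions) lemma on $\hat K$, whereby the quotient of $H^p(\hat K)$ by polynomials of degree $<p$ is normed by the $H^p$-seminorm; pulling back yields the local bound $\|f-g\|_{L^2(K)}\le C\,h^{p}\,|f|_{H^p(K)}$ for $f\in H^p(D)$, valid in the regime $h/p\rightarrow 0$. Squaring, summing over $K$, and using that the seminorms add give $\|f-\mathcal{P}_hf\|_{L^2(D)}\le C(p)\,h^{p}\,|f|_{H^p(D)}$; substituting $h=O(n^{-1/d})$ from $n=\dim S_h^p=O(h^{-d})$ produces the claimed $C(p)\,n^{-p/d}$.

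For the analytic estimate the triangulation is frozen, the number of cells $M$ is fixed, and $p\rightarrow\infty$, so $n=\dim S_h^p=O(M\,p^{d})=O(p^{d})$ and hence $p=O(n^{1/d})$. Here one replaces the finite-smoothness local bound by a $p$-version estimate: an analytic $f$ extends holomorphically to a complex neighborhood of $\hat K$, and its best polynomial approximant of degree $<p$ then converges exponentially, $\|f-g\|_{L^2(K)}\le C\exp(-c'p)$, with $c'>0$ governed by the size of the analyticity region (via Cauchy estimates on Taylor/Chebyshev coefficients, or a Bernstein-type argument). Summing the $M$ exponentially small contributions preserves the rate, and rewriting $p$ in terms of $n$ gives $C\exp(-c\,n^{1/d})$.

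The main obstacle is the analytic local estimate on a general $d$-dimensional simplex: exponential convergence of polynomial approximation is classical on an interval or a tensor-product cube through Chebyshev series and Bernstein ellipses, but transferring it to a simplex, with constants $c,C$ uniform over the fixed cells and independent of $p$, requires the careful complex-analytic machinery of $p$-version finite elements. Controlling how the analyticity neighborhood survives the affine pullback, and checking that the reference-element constant does not degrade as $p\rightarrow\infty$, is the delicate step; the algebraic case, by contrast, reduces to a routine application of Bramble--Hilbert together with the scaling argument.
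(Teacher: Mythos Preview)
The paper does not prove this proposition; it is quoted as Proposition~2.17 of Schwab--Todor~\cite{schwab2006karhunen} and used as a black box, so there is no in-paper argument to compare your sketch against. Your outline is the standard route and is correct in structure: optimality of the $L^2$ projection reduces matters to best approximation in $S_h^p$; discontinuity decouples the problem elementwise; Bramble--Hilbert plus affine scaling gives the local $h^p|f|_{H^p(K)}$ bound, and $n=O(h^{-d})$ converts $h^p$ into $n^{-p/d}$; on a fixed mesh with $p\to\infty$, analytic $f$ admits exponential polynomial approximation on each reference cell, and $n=O(p^d)$ converts $\exp(-c'p)$ into $\exp(-cn^{1/d})$. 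The obstacle you single out---making the exponential rate uniform on a $d$-simplex rather than a tensor-product cube---is genuine but handled in the $p$-FEM literature (e.g.\ Schwab's monograph), and since the triangulation is fixed only finitely many affine pullbacks are in play, so uniformity of the constants over cells is automatic.
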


The above approximation properties in terms of smoothness condition combined with Lemma \ref{le:1}
leads to estimates of the decay of the eigenvalues of the symmetric, non-negative and compact operator $\mathcal{C}_a$ by constructing a finite rank operator approximation using an appropriate $S_h^p$ space to approximate the covariance function $C_a(x,y)$. The following result is a combination of Proposition 2.18 and 2.21 in  \cite{schwab2006karhunen}.

\begin{proposition}
\label{prop:decay}
Let $\mathcal{C}_a$ be the symmetric, non-negative and compact operator on $L^2(D)$ defined in \eqref{eq:operator} with eigen-pair $(\lambda_n, e_n(x))_{n\ge 1}$, then we have the following estimates involving positive constants (depending on $C_a(x,y)$, $D$) $C_0, C_1, c_1 >0$ such that for any $n$,

\begin{enumerate}
\item
if $C_a(x,y)$ is $H^p$, $0\le \lambda_n \le C_0 n^{-\frac{p}{d}}$.
\item
if $C_a(x,y)$ is analytic, $0\le \lambda_n \le C_1 \exp(-c_1 n^{\frac{1}{d}})$.
\end{enumerate}
\end{proposition}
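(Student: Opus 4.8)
The plan is to realize $\cC_a$ as a norm limit of finite-rank operators obtained by projecting the covariance kernel onto the piecewise-polynomial spaces $S_h^p$, and then feed the resulting approximation error directly into Lemma \ref{le:1}. The decay rate of the eigenvalues will then be inherited from the projection-error rates supplied by Proposition \ref{prop:proj}.

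First I would fix a mesh size $h$, write $\mathcal{P}_h$ for the $L^2(D)$ projection onto $S_h^p$, and set $n=\dim S_h^p=O(h^{-d})$. I would then define a rank-$n$ operator $\cC_n$ as the integral operator whose kernel is obtained by projecting $C_a(\cdot,y)$ onto $S_h^p$ for each fixed $y$. Writing $\{\psi_i\}_{i=1}^n$ for an orthonormal basis of $S_h^p$, this kernel is $\sum_{i=1}^n \psi_i(x)\,\langle C_a(\cdot,y),\psi_i\rangle$, so the range of $\cC_n$ lies in $\operatorname{span}\{\psi_i\}$ and $\operatorname{rank}\cC_n\le n$.

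Next I would bound the induced operator norm of the error by the $L^2$ norm of the kernel error, using that the $B(H)$ norm is dominated by the Hilbert--Schmidt norm, which for an integral operator equals the $L^2(D\times D)$ norm of its kernel:
\[
\|\cC_a-\cC_n\|_{B(H)}\le \|\cC_a-\cC_n\|_{HS}=\|C_a-(\mathcal{P}_h\otimes\operatorname{id})C_a\|_{L^2(D\times D)}.
\]
By Fubini and Proposition \ref{prop:proj} applied to $C_a(\cdot,y)$ for each $y$,
\[
\|C_a-(\mathcal{P}_h\otimes\operatorname{id})C_a\|_{L^2(D\times D)}^2=\int_D\|(I-\mathcal{P}_h)C_a(\cdot,y)\|_{L^2(D)}^2\,dy,
\]
which is at most $|D|\,C(p)^2 n^{-2p/d}$ in the $H^p$ case and at most $|D|\,C^2\exp(-2c\,n^{1/d})$ in the analytic case. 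Applying Lemma \ref{le:1} with $m=n$ then yields $\lambda_{n+1}\le\|\cC_a-\cC_n\|_{B(H)}$, hence $\lambda_{n+1}\le C_0'\,n^{-p/d}$ (respectively $\le C_1'\exp(-c_1'\,n^{1/d})$). Since the eigenvalues are nonincreasing and the attainable dimensions $n=\dim S_h^p=O(h^{-d})$ range over all integers up to constant factors as $h$ varies, I would absorb the index shift $n+1\to n$ and the gaps between attainable dimensions into the constants $C_0,C_1,c_1$ to obtain the stated bounds for every $n$.

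The hard part will be the regularity bookkeeping in the projection step: interpreting the hypothesis ``$C_a(x,y)$ is $H^p$ (respectively analytic)'' as a \emph{uniform-in-$y$} bound on $\|C_a(\cdot,y)\|_{H^p(D)}$ (respectively on the analytic constants), so that Proposition \ref{prop:proj} can legitimately be integrated in $y$, together with the passage from the discrete set of attainable dimensions $n=\dim S_h^p$ to a bound valid at every index $n$.
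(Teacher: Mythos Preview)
Your proposal is correct and follows exactly the route the paper indicates: it explicitly says the proposition is obtained by combining Lemma~\ref{le:1} with the approximation rates of Proposition~\ref{prop:proj}, via a finite-rank approximation of $\cC_a$ built from the $S_h^p$ spaces, which is precisely your $\cC_n=(\mathcal{P}_h\otimes\mathrm{id})$ construction bounded through the Hilbert--Schmidt norm. The regularity bookkeeping you flag (reading the $H^p$/analytic hypothesis as a uniform-in-$y$ bound so that the projection error can be integrated) is indeed the only nontrivial detail, and the paper handles it in the same way---see equation~\eqref{eq:bound}, where the constant is made to depend on $\|\nabla^p C_a\|_{L^2(D\times D)}$.
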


Below we show corresponding scaling laws for the upper bounds for $\underline{N}^{\epsilon}$, which is the minimal number of terms in the truncated KL expansion to approximate the random field $a(x,\w)$ with a relative r.m.s error of $\epsilon$ (see Definition \ref{def:def2}), in terms of the correlation length in the covariance function.

\begin{theorem}
\label{th:scaling}
For a stationary random field $a(x,\w), ~x\in D\subset \mathbb{R}^d$, $D$ bounded and compact, with covariance function $C_a (x,y)=f(\frac{x-y}{\sigma})$, we have the following upper bounds for  $\underline{N}^{\epsilon}$, as $\sigma \rightarrow 0$,
\begin{enumerate}
\item
if $f \in H^p(\mathbb{R}^d), p > d$: there exists a constant  $C(D, f, d, \epsilon)>0$,  such that 
\[
 \underline{N}^{\epsilon} \le C(D, f, d, \epsilon) \sigma^{-(1+\frac{d}{2(p - d)})d}.
\]
\item
if $f$ is analytic in $\mathbb{R}^d$: there exists a constant  $C(D, f, d, \epsilon)>0$,  such that
\[
\underline{N}^{\epsilon}\le C(D, f, d, \epsilon)\sigma^{-d}|\log \sigma|^d.
\]
\end{enumerate}

\end{theorem}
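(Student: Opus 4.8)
The plan is to turn the eigenvalue-decay estimates of Proposition \ref{prop:decay} into bounds on $\underline{N}^{\epsilon}$ through Definition \ref{def:def2}, with the decisive twist that the constants $C_0,C_1,c_1$ there depend on the covariance and therefore on $\sigma$; the whole point is to make that dependence explicit. So rather than quote Proposition \ref{prop:decay} verbatim, I would re-run the argument behind it (Lemma \ref{le:1} combined with the projection estimates of Proposition \ref{prop:proj}) directly for the scaled kernel $C_a(x,y)=f((x-y)/\sigma)$. Throughout I will exploit that the trace is scale invariant: by \eqref{eq:trace}, $\sum_n\lambda_n=\int_D C_a(x,x)\,dx=f(0)|D|$ is an $O(1)$ quantity, so the threshold in Definition \ref{def:def2} is a comparison against a constant.

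The engine is the scaling of derivatives. For $\mathcal{C}_a u(x)=\int_D f((x-y)/\sigma)u(y)\,dy$ one has $\partial_x^\alpha \mathcal{C}_a u(x)=\sigma^{-|\alpha|}\int_D (\partial^\alpha f)((x-y)/\sigma)u(y)\,dy$, and bounding this integral operator by its Hilbert--Schmidt norm together with the change of variables $\xi=x-y,\ \eta=x+y$ from the proof of Theorem \ref{th:l-scaling} yields $\|\partial_x^\alpha\mathcal{C}_a u\|_{L^2}\lesssim \sigma^{-|\alpha|+d/2}\|\partial^\alpha f\|_{L^2}\|u\|_{L^2}$. The top-order term dominates as $\sigma\to 0$, so $\|\mathcal{C}_a u\|_{H^p}\lesssim \sigma^{-(p-d/2)}\|u\|_{L^2}$. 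Feeding this into the $H^p$ branch of Proposition \ref{prop:proj} and applying Lemma \ref{le:1} with the rank-$n$ operator $\mathcal{P}_h\mathcal{C}_a$, where $n=\dim S_h^p$, gives the $\sigma$-resolved bound $\lambda_n\le C\,\sigma^{-(p-d/2)}n^{-p/d}$.

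Finishing the $H^p$ case is then routine: since $p>d$ the tail converges and $\sum_{m>N}\lambda_m\le C\sigma^{-(p-d/2)}\tfrac{d}{p-d}N^{-(p-d)/d}$. Imposing the Definition \ref{def:def2} inequality $\sum_{m>N}\lambda_m\le\epsilon^2 f(0)|D|$ and solving for $N$ gives $N\lesssim \big(\sigma^{-(p-d/2)}/\epsilon^2\big)^{d/(p-d)}$, and simplifying the exponent $\tfrac{d(p-d/2)}{p-d}=d\big(1+\tfrac{d}{2(p-d)}\big)$ reproduces exactly the claimed rate, with the $\epsilon$- and $f$-dependence absorbed into $C(D,f,d,\epsilon)$.

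For the analytic case the same mechanism applies, and identifying the correct $\sigma$-scaling of the exponential rate is the heart of the matter and the main obstacle. Since $f$ is analytic in a fixed complex neighborhood of width $\rho_0$, the Cauchy estimates give $\|\partial^k f\|\lesssim M\,k!\,\rho_0^{-k}$, so $\|\partial_x^k\mathcal{C}_a u\|\lesssim \sigma^{d/2}M\,k!\,(\rho_0\sigma)^{-k}$; that is, $\mathcal{C}_a u$ is analytic with radius only $\sim\sigma$. Choosing $h\sim\sigma$ (so each element sees an $O(1)$-scale analytic profile) and letting the degree $p$ vary, the analytic branch of Proposition \ref{prop:proj}, read off the dimension $n\sim(p/h)^d\sim\sigma^{-d}p^d$, produces $\lambda_n\le C_1\exp(-c_1\sigma n^{1/d})$ — the rate constant itself is now proportional to $\sigma$. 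The delicate step is the tail estimate: writing $\sum_{m>N}\lambda_m\lesssim C_1\int_N^\infty e^{-c_1\sigma t^{1/d}}\,dt$ and bounding it by $e^{-c_1\sigma N^{1/d}}$ times polynomial-in-$(N,\sigma^{-1})$ prefactors, the requirement $\le\epsilon^2 f(0)|D|$ becomes, after taking logarithms, $c_1\sigma N^{1/d}\gtrsim |\log\epsilon|+(d-1)\log N^{1/d}+\log\sigma^{-1}$. The self-consistent solution, in which the logarithmic prefactors contribute an extra $\log\sigma^{-1}$, is $N^{1/d}\sim\sigma^{-1}|\log\sigma|$, i.e. $\underline{N}^{\epsilon}\lesssim\sigma^{-d}|\log\sigma|^d$. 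I expect verifying that these prefactor logarithms conspire to the clean power $|\log\sigma|^d$, rather than a larger power, to be the most technical point to pin down.
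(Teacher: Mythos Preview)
Your proposal is correct and follows the same overall strategy as the paper: make the $\sigma$-dependence of the constants in Proposition~\ref{prop:decay} explicit, sum the eigenvalue tail, and invert against the fixed threshold $\epsilon^2 f(0)|D|$ from \eqref{eq:trace}. For the $H^p$ case your argument is essentially identical to the paper's --- both compute $\|\nabla^p C_a\|_{L^2(D\times D)}\sim\sigma^{-(p-d/2)}$ via the change of variables $\xi=x-y$, obtain $\lambda_n\lesssim\sigma^{-(p-d/2)}n^{-p/d}$, sum the tail, and solve for $N$; the exponent algebra is the same.

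For the analytic case your route to the eigenvalue bound differs from the paper's. You use Cauchy estimates together with an $hp$-argument with $h\sim\sigma$ (so $n\sim\sigma^{-d}p^d$, hence $p\sim\sigma n^{1/d}$ and geometric decay $e^{-c\sigma n^{1/d}}$). The paper instead invokes Bochner's theorem to extend $f$ holomorphically to a uniform strip $\mathbb{R}^d+iB_\rho(0)$ and then applies Trefethen's multivariate polynomial approximation on a fixed mesh, reading off $\lambda_n\le C_1 e^{-c_1 n^{1/d}}$ with $c_1=O(\sigma)$ and $C_1=O(\sigma^{-d})$ explicitly. Both arrive at the same decay law; your approach is more elementary, while the paper's Bochner argument makes fully explicit the uniform-strip analyticity that you use tacitly (justified anyway by compactness of $D-D$). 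The tail analysis you sketch --- integral bound, take logarithms, solve $c_1\sigma N^{1/d}\gtrsim|\log\sigma|+\text{(lower order)}$ --- is carried out in the paper exactly as you anticipate, including invoking the lower bound $\underline{N}^\epsilon\ge c\sigma^{-d}$ from Theorem~\ref{th:l-scaling} to control the polynomial prefactors, and the bookkeeping confirms that all logarithmic contributions collapse to a single $|\log\sigma|$ per factor, giving the clean $\sigma^{-d}|\log\sigma|^d$.
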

\begin{proof}
1) Since $C_a (x,y)$ is $H^p$, from the approximation theory in Sobolev space~\cite{brenner2007mathematical}, we have 
\begin{equation}
\label{eq:bound}
0\le \lambda_n \le C(d) n^{-\frac{p}{d}} \|\nabla^p C_a \|_{L^2(D\times D)}
\end{equation}
where $\nabla^p$ denotes all $p$-th order derivatives and $C(d)>0$ is a constant independent of $C_a$. If $p>d$, 
\begin{equation}
	\sum_{n=\underline{N}^{\epsilon}+1}^{\infty} \lambda_n \le C(d) \|\nabla^p C_a\|_{L^2(D\times D)} \sum_{n=\underline{N}^{\epsilon}+1}^{\infty} n^{-\frac{p}{d}} \le C(d)\|\nabla^p C_a\|_{L^2(D\times D)} \left({\underline{N}^{\epsilon}}+1\right)^{1-\frac{p}{d}}.
\end{equation}
for some constant $C(d) > 0$. We then find the upper bound for $\underline{N}^{\epsilon}$ by estimating the smallest $\underline{N}^{\epsilon}$ that
\begin{equation}
C(d)\|\nabla^p C_a\|_{L^2(D\times D)} \left({\underline{N}^{\epsilon}}+1\right)^{1-\frac{p}{d}} \le \epsilon^2\|a\|_2^2 =\epsilon^2f(0)|D|,
\end{equation}
where $|D|$ is the volume of $D$. Therefore there exists constant $C(d) > 0$, which we keep the same notation, such that
\begin{equation}
\label{eq:ep1}
	\underline{N}^{\epsilon} \le C(d) \epsilon^{\frac{2d}{d - p}} (\|a\|_2^2)^{\frac{d}{d - p}} \|\nabla^{p} C_a\|_{L^2(D\times D)}^{\frac{d}{p - d}}.
\end{equation}
Since $C_a (x,y)=f(\frac{x-y}{\sigma})$ and $f(x) \in H^p(\mathbb{R}^d)$, we have
\begin{equation}
\|\nabla^{p} C_a\|_{L^2(D\times D)}^{\frac{d}{p-d}} \le \|f\|_{H^p(\mathbb{R}^d)}|S|\sigma^{-(1+\frac{d}{2(p - d)})d},
\end{equation}
where $S=D+D$ and $|S|$ is the volume of $S$. Combining the above two equations we get the scaling law for the upper bound.
\\ \\
2) Denote the difference set $T = D-D$, without loss of generality, we may assume $T$ contains an open ball $B\subset \mathbb{R}^d$ centered at zero. 

First we show that $c_1 = O(\sigma)$ and $C_1 = O(\sigma^{-d})$ in Proposition~\ref{prop:decay}. Since the covariance function $C_a(x, y) = f(\frac{x-y}{\sigma})$ is stationary then by Bochner's theorem, 
\begin{equation}
f(t) = \int_{\mathbb{R}^d} e^{i \xi\cdot t} d\mu(\xi),\quad t\in\mathbb{R}^d,
\end{equation}
for some positive finite measure $\mu$. Define following non-empty convex set $\Theta$ by the Laplace transform of the measure $\mu$,
\begin{equation}
\Theta = \{ \theta \in \mathbb{R}^d : \int_{\mathbb{R}^d}  e^{\theta\cdot \xi} d\mu(\xi) < \infty \},
\end{equation}
since $f(t)$ is analytic on $\mathbb{R}^d$, then the analyticity of $f(t)$ can be continuously extended to the strip $\mathbb{R}^d + i \Theta\subseteq \mathbb{C}^d$ by Theorem 2.7.1 of~\cite{lehmann2006testing} with
\begin{equation}
f(z) = \int_{\mathbb{R}^d} e^{i \xi\cdot z} d\mu(\xi),\quad z\in \mathbb{R}^d + i \Theta.
\end{equation}
For simplicity, we assume there exists $B_{\rho}(0)$ a closed ball of radius $\rho > 0$ centered at zero contained in $\Theta$ and denote $K = C_R + i B_{\rho}(0)$ where $C_R = [-R, R]^d$ is a hypercube covering $T = D-D$ and select $R' > R$. Then by Theorem 4.2 of~\cite{trefethen2017multivariate} and the Theorem 11 of Chapter 5 in~\cite{bochner1948several} with the contour integral, we have following error estimate
\begin{equation}\label{eq:estimate2}
\|f - \mathcal{P}_h f\|_{L^{\infty}(K)}\le M \sup_{K} |f|\left(\frac{ R'}{ \rho}(1 +  \frac{\rho}{R'})^{-n^{1/d}}\right)^d
\end{equation}
for some constant $M > 0$, where $\mathcal{P}_h$ is the projection to $S_h^p$ defined in Proposition \ref{prop:proj}. After scaling with the correlation length $\sigma$, let $\tilde{f}(z) = f(\frac{z}{\sigma})$ for $z\in K_{\sigma} = C_R + i B_{\sigma\rho}(0)$, and then with~\eqref{eq:estimate2} we can estimate $\lambda_n$ for $C_a(x, y)$ by 
\begin{equation}
\begin{aligned}
0 \le \lambda_n &\le \tilde{M} \sup_{K_{\sigma}} |\tilde{f}|\left( \frac{R'}{\rho}\right)^d \frac{1}{ \sigma^d} \exp\left( -dn^{1/d}\log(1 +  \frac{\sigma\rho}{R'})\right)\\&= \tilde{M} \sup_{K} |f|\left( \frac{R'}{\rho}\right)^d \frac{1}{ \sigma^d} \exp\left( -dn^{1/d}\log(1 +  \frac{\sigma\rho}{R'})\right)
\end{aligned}
\end{equation}
for some constant $\tilde{M} > 0$. Here $\sup_{K_{\sigma}} |\tilde{f}| = \sup_{K} |{f}|$ is due to  $|f|$ attains the supremum at the set $0 + iB_{\rho}(0)$ and $|\tilde{f}|$ attains the supremum on the set $0 + iB_{\sigma\rho}(0)$. Since $\log(1 + \frac{\sigma\rho}{R'} ) = O(\sigma)$ as $\sigma\to 0$, we have $0\le \lambda_n \le C_1 \exp(-c_1 n^{\frac{1}{d}})$ for $c_1 = O(\sigma)$, $C_1 = O(\sigma^{-d})$.
From this estimate, we provide a scaling for the upper bound of $\uNe$.
\begin{equation}
\sum_{n=\uNe + 1}^{\infty} \lambda_n \le C_1 \sum_{n=\uNe + 1}^{\infty} \exp(-c_1 n^{1/d}) \le C_1 \int_{\uNe}^{\infty} \exp(-c_1 x^{1/d}) dx.
\end{equation}
By change of variable $y = x^{1/d}$ and integration by parts, we have
\begin{equation}
\label{eq:estimate1}
C_1\int_{\uNe}^{\infty} \exp(-c_1 x^{1/d}) dx \le C(d)\frac{C_1}{c_1} \exp( -c_1 (\uNe)^{1/d}) \left( (\uNe)^{(d-1)/d} + c_1^{-(d-1)}\right).
\end{equation}
Since we already know that $\uNe \ge O(\sigma^{-d})$ from Theorem \ref{th:l-scaling} and $c_1 = O(\sigma)$, therefore $(\uNe)^{(d-1)/d} \ge O(c_1^{-(d-1)})$. We can absorb the $c_1$ term into $\uNe$ term and change the constant $C(d)$, which we keep the same notation.
Hence 
\[
	\sum_{n=\uNe + 1}^{\infty} \lambda_n\le C(d)\frac{C_1}{c_1}\exp( -c_1 (\uNe)^{1/d})  (\uNe)^{(d-1)/d}.
\]
We find the upper bound for $\uNe$ by estimate the smallest $\uNe > 0$ such that,
\[
C(d)\frac{C_1}{c_1} \exp( -c_1 (\uNe)^{1/d})  (\uNe)^{(d-1)/d} \le \epsilon^2 \|a\|_2^2.
\]
We compute an upper bound by $\uNe \le y^d$, where $y>0$ solves the following equation 
\[
\exp(-c_1 y) y^{d-1} = \frac{c_1\epsilon^2 \|a\|_2^2}{C_1C(d)}
\]
which is the same as
\begin{equation}
\label{eq:5}
y - \frac{d-1}{c_1}\log y = \frac{1}{c_1}\log \left(\frac{C(d)C_1}{c_1\epsilon^2 \|a\|_2^2}\right)
\end{equation}
Denote the right-hand-side as $b$, then 
$
b > \frac{2d}{c_1}
$
if provided $\sigma$ is small enough such that $\frac{c_1}{C_1} < \frac{C(d)}{\exp(2d)\epsilon^2 \|a\|_2^2}$. It is obvious $y\ge b$ from \eqref{eq:5}. Let $y = b(1 + t)$, for some $t \ge 0$, we have the following
\[
b(1+t) -\frac{d-1}{c_1}\log b - \frac{d-1}{c_1}\log(1+t) = b.
\]
and
\[
\left( b - \frac{d-1}{c_1}\right) t \le \frac{d-1}{c_1} \log b
\]
since $\ln(1+t)\le t$ for all $t \ge 0$. From $b > \frac{2d}{c_1}
~\Rightarrow ~ b - \frac{d-1}{c_1} > \frac{b}{2} $.
we obtain
\[
t \le \frac{2(d-1)}{c_1}\frac{\log b}{b},
\]
which means
\[
y \le b\left( 1 + \frac{2(d-1)}{c_1}\frac{\log b}{b}\right) = b + \frac{2(d-1)}{c_1}\log b
\quad \mbox{and} 
\quad \uNe \le \left(b + \frac{2(d-1)}{c_1}\log b\right)^d.
\]
Plug in 
$b = \frac{1}{c_1}\log \left(\frac{C(d)C_1}{c_1\epsilon^2 \|a\|^2}\right)$ and use the fact that $b > \frac{2d}{c_1}$,  we have
$$\log \left(\frac{C(d)C_1}{c_1\epsilon^2 \|a\|^2}\right) \ge 2d > 1.$$
From the fact that $\log x \le x- 1 < x$ for $x\ge 1$,
\[
\log b = \log\left(\frac{1}{c_1}\log \left(\frac{C(d)C_1}{c_1\epsilon^2 \|a\|^2}\right)\right)< \log\left(  \frac{1}{c_1}\left( \frac{C(d)C_1}{c_1\epsilon^2 \|a\|^2}  \right) \right).
\]
 We can show an upper bound for $\uNe$ using $c_1 = O(\sigma)$ and $C_1 = O(\sigma^{-d})$,
\begin{equation}
\label{eq:ep2}
\uNe \le \left(\frac{1}{c_1}\log \left(\frac{C(d)C_1}{c_1\epsilon^2 \|a\|^2}\right) + \frac{2(d-1)}{c_1}\log\left(  \frac{C(d)C_1}{c_1^2\epsilon^2 \|a\|^2}   \right)\right)^d = O\left(\Big|\frac{\log \sigma}{\sigma}\Big|^d \right).
\end{equation}

\end{proof}

\begin{remark}
If $f\in H^p$, the finite element space $S_h^p$ and its approximation property \eqref{eq:Hp} in Proposition \ref{prop:proj} is optimal up to some constant. Hence, we expect the upper bound for $\lambda_n$ and our scaling law for the upper bound of $\uNe$ is also sharp. In other words, the non-smoothness of the random field or its derivatives may introduce extra complexity factor. 
When $f$ is analytic, the correlation length is the only length scale in the random field. So we get sharp scaling laws for both the lower bound and upper bound for the intrinsic complexity.
\end{remark}

From the proof of Theorem \ref{th:scaling}, i.e., equations \eqref{eq:ep1}, \eqref{eq:ep2}, we can deduce the upper bound for $\uNe$ in terms of $\epsilon$.

\begin{theorem}
For a stationary random field $a(x,\w), ~x\in D\subset \mathbb{R}^d$, $D$ bounded and compact, with covariance function $C_a (x,y)=f(x-y)$, we have the following upper bounds for  $\underline{N}^{\epsilon}$, as $\epsilon \rightarrow 0$,
\begin{enumerate}
\item
if $f$ is $H^p$ and $p > d$: $\exists C(D, f, d)>0$ s.t. $\underline{N}^{\epsilon} \le  C(D, f, d) \epsilon^{\frac{2d}{d-p}}$, 
\item
 if $f$ is analytic: $\exists C(D, f, d)>0$ s.t. $\underline{N}^{\epsilon}\le C(D, f, d)|\log \epsilon|^d$.
\end{enumerate}

\end{theorem}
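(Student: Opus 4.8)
The plan is to treat this statement as a direct corollary of the proof of Theorem~\ref{th:scaling}. The key observation is that here the covariance function is $C_a(x,y)=f(x-y)$ with no correlation-length parameter, which is precisely the $\sigma=1$ specialization of the earlier analysis. Consequently every quantity that was previously tracked as a function of $\sigma$ --- in particular $\|a\|_2^2$, $\|\nabla^p C_a\|_{L^2(D\times D)}$, and the eigenvalue-decay constants $c_1, C_1$ of Proposition~\ref{prop:decay} --- is now simply a fixed constant depending on $D$, $f$, and $d$. The whole task reduces to re-reading equations~\eqref{eq:ep1} and~\eqref{eq:ep2} while isolating their dependence on $\epsilon$ alone.

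For part~(1) I would start from the Sobolev eigenvalue bound $0\le \lambda_n\le C(d)n^{-p/d}\|\nabla^p C_a\|_{L^2(D\times D)}$ used in Theorem~\ref{th:scaling}. Since $p>d$, the tail $\sum_{n>N}n^{-p/d}$ is summable and bounded by $C\,N^{1-p/d}$; requiring this tail to fall below $\epsilon^2\|a\|_2^2$ and solving for $N$ reproduces exactly~\eqref{eq:ep1}. With the two norm factors now constants, the right-hand side of~\eqref{eq:ep1} collapses to $C(D,f,d)\,\epsilon^{2d/(d-p)}$, which is the claimed bound.

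For part~(2) I would reuse the eigenvalue decay $0\le\lambda_n\le C_1\exp(-c_1 n^{1/d})$ of Proposition~\ref{prop:decay}, now with $c_1, C_1$ fixed. The tail sum is controlled by the integral estimate~\eqref{eq:estimate1}, and the threshold is pinned down by the transcendental equation~\eqref{eq:5}; the inversion argument of Theorem~\ref{th:scaling} (setting $y=b(1+t)$ and bounding $t$ via $\log(1+t)\le t$) carries over verbatim to give~\eqref{eq:ep2}. The remaining step is to read off the $\epsilon$-scaling: with $c_1, C_1$ constant, the dominant contribution to the bracket in~\eqref{eq:ep2} is the term $\frac{1}{c_1}\log(1/\epsilon^2)=\frac{2}{c_1}|\log\epsilon|$, while the $\log\log$ and additive-constant pieces are lower order as $\epsilon\to0$. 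Raising to the $d$-th power yields $\underline{N}^{\epsilon}\le C(D,f,d)\,|\log\epsilon|^d$.

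The main obstacle, if one calls it that, is bookkeeping rather than mathematics: in the analytic case one must verify that the subleading terms (the $\log b=O(\log|\log\epsilon|)$ correction and the additive constants coming from $c_1, C_1$) are genuinely dominated by the leading $|\log\epsilon|$ as $\epsilon\to0$, so that they can be absorbed into $C(D,f,d)$ without altering the exponent $d$. Since the heavy asymptotic analysis was already performed in Theorem~\ref{th:scaling}, this amounts to checking that the same estimates remain valid when $\sigma$ is held fixed and $\epsilon$ is the vanishing parameter, which is immediate from~\eqref{eq:5}.
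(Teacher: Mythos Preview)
Your proposal is correct and follows exactly the approach the paper intends: the paper states this theorem with no separate proof, merely noting that ``from the proof of Theorem~\ref{th:scaling}, i.e., equations~\eqref{eq:ep1},~\eqref{eq:ep2}, we can deduce the upper bound for $\underline{N}^{\epsilon}$ in terms of $\epsilon$,'' and you have carried out precisely that deduction by freezing the $\sigma$-dependent constants and isolating the $\epsilon$-scaling. Your bookkeeping remarks about the analytic case (checking that the threshold condition $b>2d/c_1$ and the comparison $(\underline{N}^{\epsilon})^{(d-1)/d}\gtrsim c_1^{-(d-1)}$ still hold once $\epsilon$ rather than $\sigma$ is the small parameter) go beyond what the paper spells out but are exactly the points one should verify.
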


\subsection{Experiements}
\label{sec:test}

Here we provide numerical experiments to verify our estimate for the intrinsic complexity of a random field $a(x,\omega), x\in D\subset R^d$ with a given covariance function $C(x,y)$ through the KL expansion. In particular, we demonstrate the sharp scaling laws $\underline{N}^{\epsilon}=O(\sigma^{-d})$ as $\sigma\rightarrow 0$ for two covariance functions that are often used to model  random fields in practice:
\begin{equation}
\label{eq:kernel}
\tilde{C}_{\sigma}(x,y)=\exp\left(-\frac{|x-y|^2}{\sigma^2}\right),  \quad \quad \hat{C}_{\sigma}(x,y)=\exp\left(-\frac{|x-y|}{\sigma}\right), 
\end{equation}
In our experiments, we take $D$ to be the unit interval, the unit square and the unit sphere (where $|x-y|$ is the geodesic distance) and 
compute the eigenvalues of $\tilde{C}_{\sigma}$ and $\hat{C}_{\sigma}$ that are discretized uniformly in $D$ with a grid size $h$.
We show eigenvalue behavior with different resolution $r$, the number of points per $\sigma$ ($\sigma=rh$), different tolerance $\epsilon$, and the scaling law as $\sigma\rightarrow 0$.

{\bf Example 1: unit interval.}
Figure \ref{fig:1D-ker1} shows the test results for kernel $\tilde{C}_{\sigma}(x,y)=\exp(-\frac{|x-y|^2}{\sigma^2})$.  Figure \ref{fig:1D-ker1} (a) shows how $\underline{N}^{\epsilon}$, the minimal number of terms in truncated KL expansion to approximate the random field $a(x,\w)$ with a relative r.m.s error of $\epsilon$ (Definition \ref{def:def2}), as a function of the discretized resolution $r$, where $h=\sigma/r$, with a fixed $\sigma=0.02$. Figure \ref{fig:1D-ker1} (b) shows how $\overline{N}^{\epsilon}$, the number of eigenvalues that are larger than $\epsilon$ (Definition \ref{def:def1}), as a function of the discretized resolution $r$. As can be seen, due to the normalization of the definition $\underline{N}^{\epsilon}$ (relative error normalized by $\|a\|_2$) and the smoothness of the kernel $\tilde{C}_{\sigma}$, $2$ or $3$ points per $\sigma$ is fine enough to resolve the covariance kernel in numerical computation. Although the dimension of the discretized covariance matrix becomes larger  as the resolution $r$ becomes higher, $\underline{N}^{\epsilon}$ remains the same. In other words, the random field can be approximated with a relative r.m.s error of $\epsilon$ by projection to $V\otimes L^2(\W,dP)$, where $V\subset L^2(D)$ is a linear subspace of dimension $\underline{N}^{\epsilon}$ spanned by the leading eigen-functions $e_n, n=1, 2, \ldots, \underline{N}^{\epsilon}$.
While the unnormalized $\overline{N}^{\epsilon}$, the number of eigenvalues that are larger than $\epsilon$ (or $\epsilon$-rank approximation), grows slowly with the discretization resolution $r$, since the dimension of the corresponding discretized matrix becomes larger as the resolution becomes higher. Hence it takes a higher rank matrix to approximate it uniformly well. 
The growth is likely to be logarithmic since $\tilde{C}$ is analytic and can be approximated by polynomials with an error that decays exponentially. 
Figure \ref{fig:1D-ker1} (c)\&(d) shows $\underline{N}^{\epsilon}$ and $\overline{N}^{\epsilon}$ as a function of $\sigma^{-1}$ for different $\epsilon$. We see a clear linear growth for both. The numerical result is based on eigenvalue decomposition of the matrix that corresponds to a discretization of  $\tilde{C}_{\sigma}(x,y), x, y \in [0,1]$ with $h=0.25\sigma$. 

\begin{figure}[!htb]
	\begin{center}
		\hspace{-0.6cm}
		\begin{minipage}{0.46\textwidth}
			\includegraphics[width=1\textwidth]{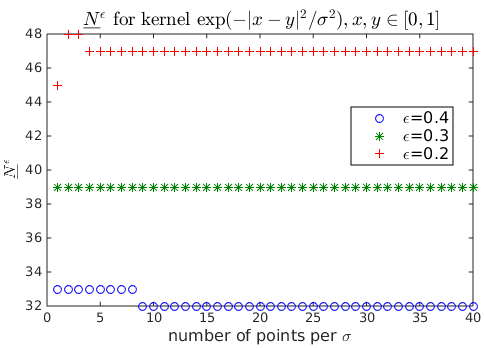}
			\caption*{(a) $\underline{N}^{\epsilon}$ vs. $r$ with fixed $\sigma = 0.02$}
		\end{minipage}
 \hfil
		\begin{minipage}{0.46\textwidth}
		\includegraphics[width=1\textwidth]{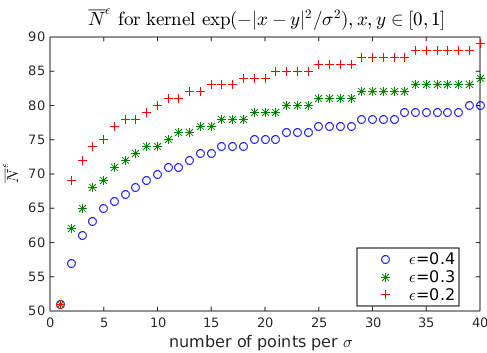}
		\caption*{(b) $\overline{N}^{\epsilon}$ vs. $r$ with fixed $\sigma = 0.02$}
		\end{minipage}
		\\
		\vspace{0.3cm}
		\hspace{-0.6cm}
		\begin{minipage}{0.46\textwidth}
		\includegraphics[width=1\textwidth]{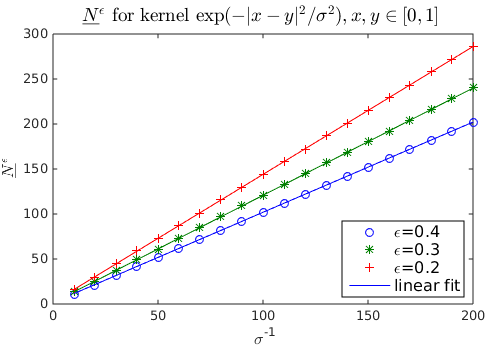}
		\caption*{(c) $\underline{N}^{\epsilon}$ vs. $\sigma$ with fixed $h=0.25\sigma$}
		\end{minipage}
		\hfil
		\begin{minipage}{0.46\textwidth}
		\includegraphics[width=1\textwidth]{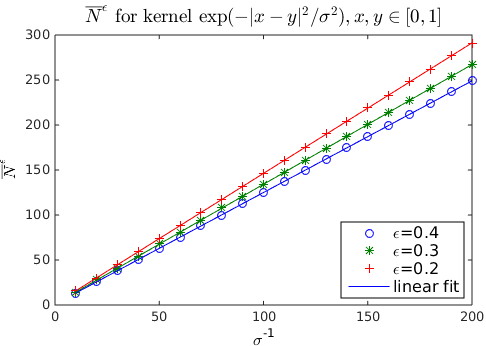}
		\caption*{(d) $\overline{N}^{\epsilon}$ vs $\sigma$ with fixed $h=0.25\sigma$}
		\end{minipage}
		\caption{Eigenvalue behavior for kernel function $\tilde{C}_{\sigma}=\exp(-\frac{|x-y|^2}{\sigma^2})$ on the unit interval.}
		\label{fig:1D-ker1}
	\end{center}
\end{figure}


Figure \ref{fig:1D-ker2} shows the corresponding results for kernel $\hat{C}_{\sigma}(x,y)=\exp(-\frac{|x-y|}{\sigma})$. Figure \ref{fig:1D-ker2} (a) shows $\underline{N}^{\epsilon}$ as a function of the discretized resolution $r$ with a fixed $\sigma=0.02$. Due to the singularity of $\hat{C}_{\sigma}(x,y)$ at $x=y$, $\underline{N}^{\epsilon}$ increases as the singularity is more resolved. However, it appears to be plateaued eventually.  While Figure \ref{fig:1D-ker2}(b) shows that the unnormalized $\overline{N}^{\epsilon}$, the number of eigenvalues that are larger than $\epsilon$, grows with the discretization resolution $r$ and with a rate faster than that of the smooth kernel $\tilde{C}$. 
Figure \ref{fig:1D-ker2} (c)\&(d) shows $\underline{N}^{\epsilon}$ and $\overline{N}^{\epsilon}$ as a function of $\sigma^{-1}$ for different $\epsilon$. Again, we see a clear linear growth. However,  both $\underline{N}^{\epsilon}$ and $\overline{N}^{\epsilon}$ grow with a faster rate than that of the analytic covariance function $\tilde{C}_{\sigma}(x,y)$.

\begin{figure}[!htb]
	\begin{center}
		\hspace{-0.6cm}
		\begin{minipage}{0.46\textwidth}
			\includegraphics[width=1\textwidth]{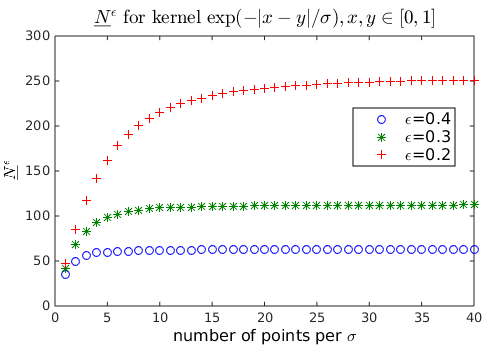}
			\caption*{(a) $\underline{N}^{\epsilon}$ vs. $r$ with fixed $\sigma=0.02$}
		\end{minipage}
		\hfil
		\begin{minipage}{0.46\textwidth}
			\includegraphics[width=1\textwidth]{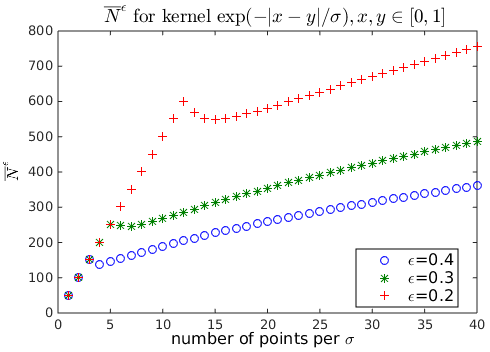}
			\caption*{(b) $\overline{N}^{\epsilon}$ vs. $r$ with fixed $\sigma=0.02$}
		\end{minipage}
		\\
		\vspace{0.3cm}
		\hspace{-0.6cm}
		\begin{minipage}{0.46\textwidth}
			\includegraphics[width=1\textwidth]{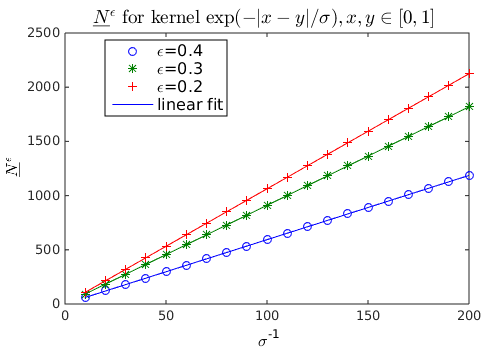}
			\caption*{(c) $\underline{N}^{\epsilon}$ vs. $\sigma$ with fixed $h=0.25\sigma$}
		\end{minipage}
		\hfil
		\begin{minipage}{0.46\textwidth}
			\includegraphics[width=1\textwidth]{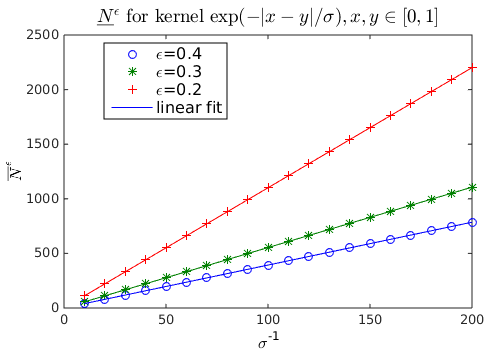}
			\caption*{(d) $\overline{N}^{\epsilon}$ vs $\sigma$ with fixed $h=0.25\sigma$}
		\end{minipage}
		\caption{Eigenvalue behavior for covariance function $\hat{C}_{\sigma}=\exp(-\frac{|x-y|}{\sigma})$ on the unit interval.}
		\label{fig:1D-ker2}
	\end{center}
\end{figure}



{\bf Example 2: unit square.} In this example, we show similar experiments for 2D random fields defined on the unit square with the two covariance functions defined in \eqref{eq:kernel}. Figure \ref{fig:square-ker1}  and Figure \ref{fig:square-ker2} shows the test results for the covariance function $\tilde{C}_{\sigma}$.  Figure \ref{fig:square-ker1} (a) shows how $\underline{N}^{\epsilon}$ as a function of the discretized resolution $r$, where $h=\sigma/r$, with a fixed $\sigma=0.1$. Similar to the 1D example above, a few points per $\sigma$ is fine enough to resolve the 2D covariance function with respect to the relative r.m.s error due to the normalization of  $\underline{N}^{\epsilon}$ and the smoothness of the covariance function $\tilde{C}_{\sigma}$. On the other hand, the unnormalized $\overline{N}^{\epsilon}$ grows slowly with the discretization resolution $r$. Figure \ref{fig:square-ker1} (c)\&(d) shows $\underline{N}^{\epsilon}$ and $\overline{N}^{\epsilon}$ as a function of $\sigma^{-1}$ for different $\epsilon$. We see a clear quadratic growth. The numerical results are based on the eigenvalue decomposition of the discretized covariance matrix for  $\tilde{C}_{\sigma}(x,y), x.y \in [0,1]\times[0,1]$ with $h=0.2\sigma$. 

\begin{figure}[!htb]
	\begin{center}
		\hspace{-0.6cm}
		\begin{minipage}{0.46\textwidth}
			\includegraphics[width=1\textwidth]{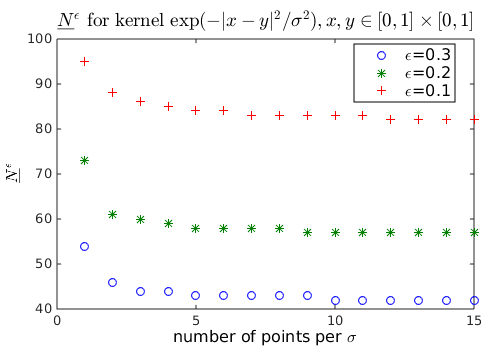}
			\caption*{(a) $\underline{N}^{\epsilon}$ vs. $r$ with fixed $\sigma=0.1$}
		\end{minipage}
		\hfil
		\begin{minipage}{0.46\textwidth}
			\includegraphics[width=1\textwidth]{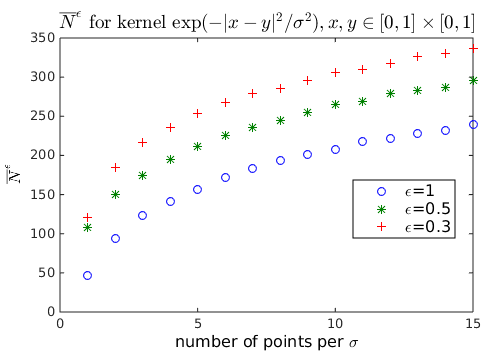}
			\caption*{(b) $\overline{N}^{\epsilon}$ vs. $r$ with fixed $\sigma=0.1$}
		\end{minipage}
		\\
		\vspace{0.3cm}
		\hspace{-0.6cm}
		\begin{minipage}{0.46\textwidth}
			\includegraphics[width=1\textwidth]{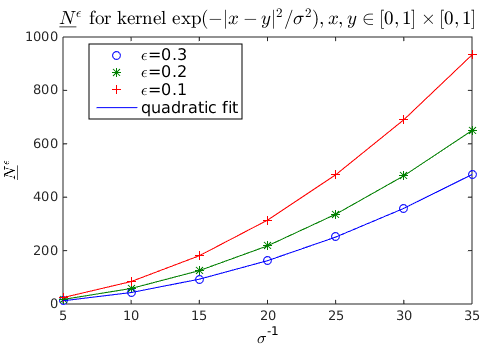}
			\caption*{(c) $\underline{N}^{\epsilon}$ vs. $\sigma$ with fixed $h=0.2\sigma$}
		\end{minipage}
		\hfil
		\begin{minipage}{0.46\textwidth}
			\includegraphics[width=1\textwidth]{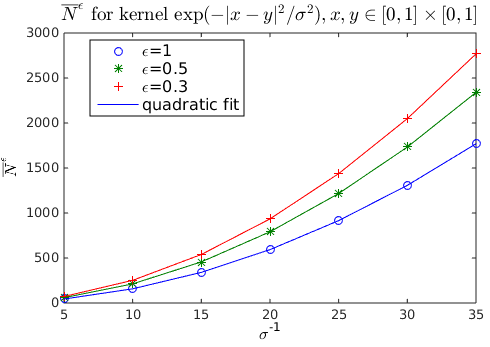}
			\caption*{(d) $\overline{N}^{\epsilon}$ vs. $\sigma$ with fixed $h=0.2\sigma$}
		\end{minipage}
		\caption{Eigenvalue behavior for covariance function $\tilde{C}_{\sigma}=\exp(-\frac{|x-y|^2}{\sigma^2})$ on the unit square.}
		\label{fig:square-ker1}
	\end{center}
\end{figure}

%

Figure \ref{fig:square-ker2} shows the corresponding results for covariance matrix $\hat{C}_{\sigma}$. Figure \ref{fig:square-ker2} (a) shows $\underline{N}^{\epsilon}$ as a function of the discretized resolution $r$ with a fixed $\sigma=0.1$. Due to the non-smoothness of $\hat{C}_{\sigma}(x,y)$ at $x=y$, $\underline{N}^{\epsilon}$ increases as the singularity is more resolved. However, it appears to be plateaued eventually.  While Figure \ref{fig:square-ker2}(b) shows that the unnormalized $\overline{N}^{\epsilon}$, the number of eigenvalues that are larger than $\epsilon$, grows with the discretization resolution $r$ and with a rate faster than that of the smooth covariance function. Figure \ref{fig:square-ker2} (c)\&(d) shows $\underline{N}^{\epsilon}$ and $\overline{N}^{\epsilon}$ as a function of $\sigma^{-1}$ for different $\epsilon$. Again, we see a clear quadratic growth. 

\begin{figure}[!htb]
	\begin{center}
		\hspace{-0.6cm}
		\begin{minipage}{0.46\textwidth}
			\includegraphics[width=1\textwidth]{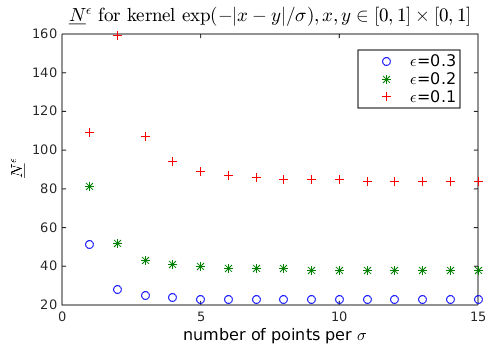}
			\caption*{(a) $\underline{N}^{\epsilon}$ vs. $r$ with fixed $\sigma=0.1$}
		\end{minipage}
		\hfil
		\begin{minipage}{0.46\textwidth}
			\includegraphics[width=1\textwidth]{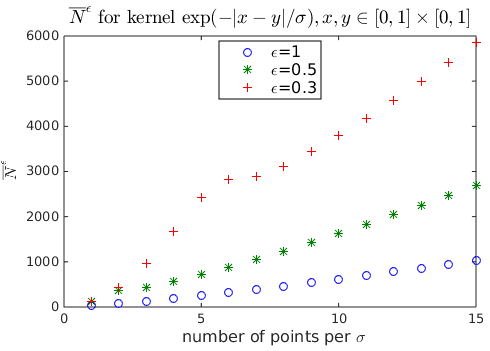}
			\caption*{(b) $\overline{N}^{\epsilon}$ vs. $r$ with fixed $\sigma=0.1$}
		\end{minipage}
		\\
		\vspace{0.3cm}
		\hspace{-0.6cm}
		\begin{minipage}{0.46\textwidth}
			\includegraphics[width=1\textwidth]{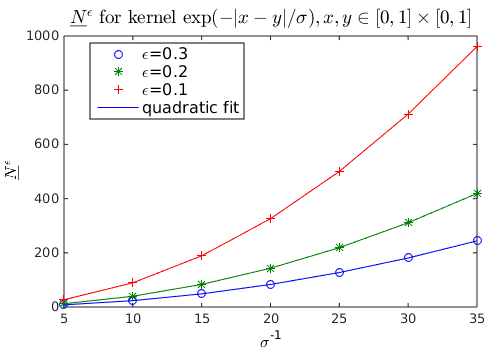}
			\caption*{(c) $\underline{N}^{\epsilon}$ vs. $\sigma$ with fixed $h=0.2\sigma$}
		\end{minipage}
		\hfil
		\begin{minipage}{0.46\textwidth}
			\includegraphics[width=1\textwidth]{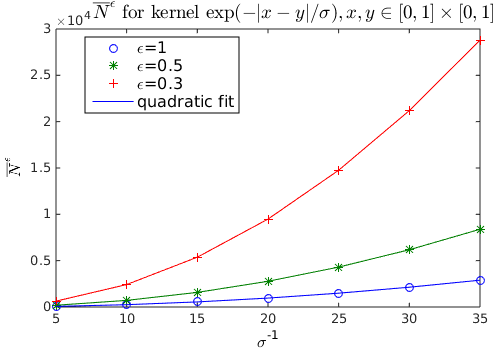}
			\caption*{(d) $\overline{N}^{\epsilon}$ vs. $\sigma$ with fixed $h=0.2\sigma$}
		\end{minipage}
		\caption{Eigenvalue behavior for covariance function $\hat{C}_{\sigma}=\exp(-\frac{|x-y|}{\sigma})$ on the unit square.}
		\label{fig:square-ker2}
	\end{center}
\end{figure}

%

{\bf Example: unit sphere.} In this example, we show results for random field defined on a unit sphere with the two covariance functions defined in \eqref{eq:kernel}, where $|x-y|$ is the geodesic distance between $x$ and $y$ on the unit sphere. The results are very similar to those on the unit square.

\begin{figure}[!htb]
	\begin{center}
		\hspace{-0.6cm}
		\begin{minipage}{0.46\textwidth}
			\includegraphics[width=1\textwidth]{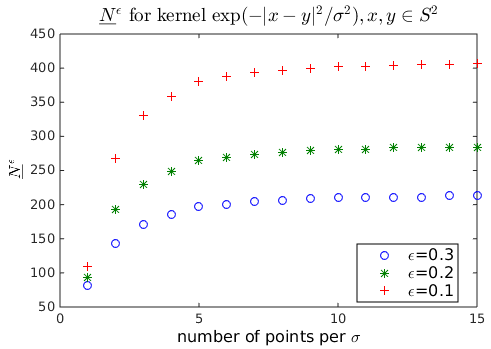}
			\caption*{(a) $\underline{N}^{\epsilon}$ vs. $r$ with fixed $\sigma=0.15$}
		\end{minipage}
		\hfil
		\begin{minipage}{0.46\textwidth}
			\includegraphics[width=1\textwidth]{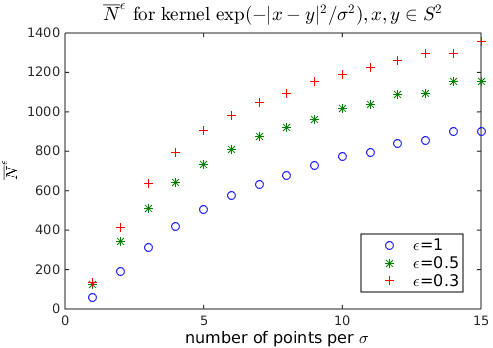}
			\caption*{(b) $\overline{N}^{\epsilon}$ vs. $r$ with fixed $\sigma=0.15$}
		\end{minipage}
		\\
		\vspace{0.3cm}
		\hspace{-0.6cm}
		\begin{minipage}{0.46\textwidth}
			\includegraphics[width=1\textwidth]{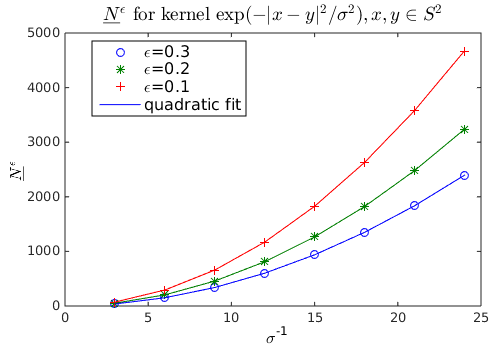}
			\caption*{(c) $\underline{N}^{\epsilon}$ vs. $\sigma$ with fixed $h=0.25\sigma$}
		\end{minipage}
		\hfil
		\begin{minipage}{0.46\textwidth}
			\includegraphics[width=1\textwidth]{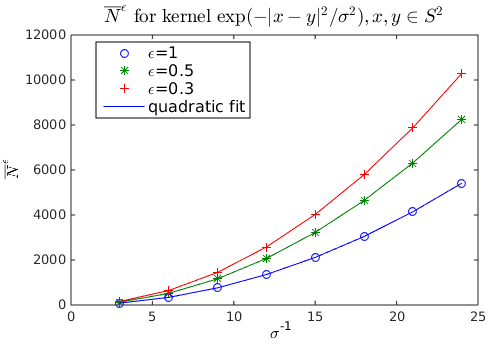}
			\caption*{(d) $\overline{N}^{\epsilon}$ vs $\sigma$ with fixed $h=0.25\sigma$}
		\end{minipage}
		\caption{Eigenvalue behavior for covariance function $\tilde{C}_{\sigma}=\exp(-\frac{|x-y|^2}{\sigma^2})$ on the unit sphere.}
		\label{fig:sphere-ker1}
	\end{center}
\end{figure}


\begin{figure}[!htb]
	\begin{center}
		\hspace{-0.6cm}
		\begin{minipage}{0.46\textwidth}
			\includegraphics[width=1\textwidth]{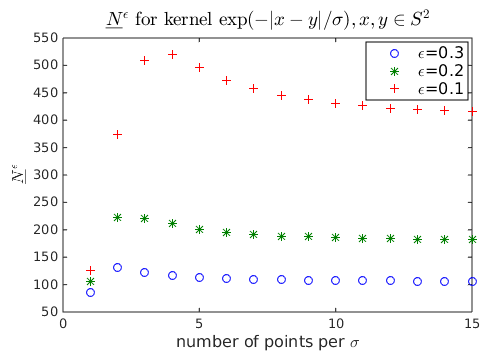}
			\caption*{(a) $\underline{N}^{\epsilon}$ vs. $r$ with fixed $\sigma=0.15$}
		\end{minipage}
		\hfil
		\begin{minipage}{0.46\textwidth}
			\includegraphics[width=1\textwidth]{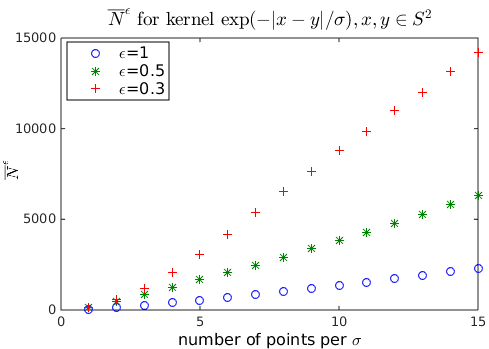}
			\caption*{(b) $\overline{N}^{\epsilon}$ vs. $r$ with fixed $\sigma=0.15$}
		\end{minipage}
		\\
		\vspace{0.3cm}
		\hspace{-0.6cm}
		\begin{minipage}{0.46\textwidth}
			\includegraphics[width=1\textwidth]{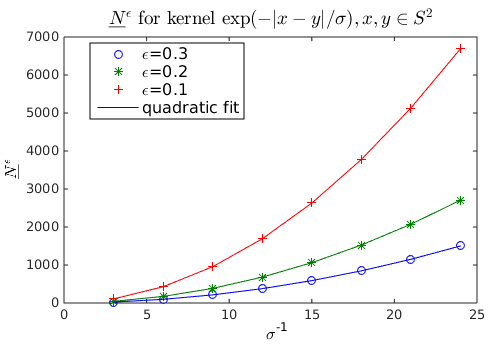}
			\caption*{(c) $\underline{N}^{\epsilon}$ vs. $\sigma$ with fixed $h=0.25\sigma$}
		\end{minipage}
		\hfil
		\begin{minipage}{0.46\textwidth}
			\includegraphics[width=1\textwidth]{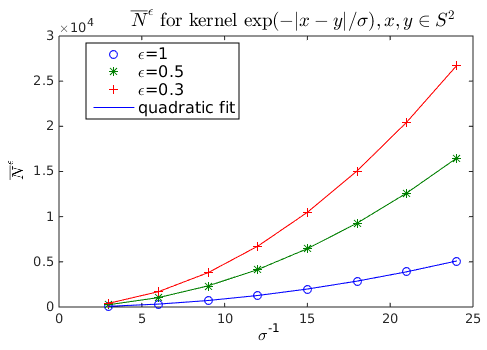}
			\caption*{(d) $\overline{N}^{\epsilon}$ vs $\sigma$ with fixed $h=0.25\sigma$}
		\end{minipage}
		\caption{Eigenvalue behavior for covariance function $\hat{C}_{\sigma}=\exp(-\frac{|x-y|}{\sigma})$ on the unit sphere.}
		\label{fig:sphere-ker2}
	\end{center}
\end{figure}

%


\section{Approximate Embedding of Random Vectors}
\label{sec:embedding}
Given a set of random vectors, $\bv_i \in \mathbb{R}^d, i =1, 2, \ldots, n$, a natural question is what is the dimension of the best linear space that can approximate this set of vectors to a certain error tolerance. For example, the set of random vectors can be the ensemble of random variables of a random field sampled at different locations. For exact embedding, lower bound and asymptotic lower bound are provided in terms of the covariance matrix $A=\{a_{ij}\}, a_{i,j}=\bv_i^T\bv_j, i,j=1, 2, \ldots, n$ \cite{alon2003problems}. Asymptotic upper bounds can be provided by Johnson-Lindenstrauss Lemma \cite{johnson1984extensions}. Here we introduce the definition of relative root mean square (r.m.s) $\epsilon$-embedding of a set of vectors. 

\begin{definition}
\label{def:eps}
A set of vectors $\{ \bv_i\}_{i=1}^n$ are $\epsilon$-embedded in a linear subspace, $S$, in relative r.m.s sense,  if 
$$\frac{\sum_{i=1}^n ||\bv_i-P_{S}\bv_i||_2^2}{\sum_{i=1}^n ||\bv_i||_2^2} \leq \epsilon^2$$ 
where $P_{S}\bv_i$ denotes the projection of $\bv_i$ in $S$.
\end{definition}
A natural question is what is the least dimension of all such linear subspaces defined as below.
\begin{definition}
\label{def:N}
For a set of vectors, define $\underline{N}^{\epsilon}$ to be the least dimension of all subspaces $S$ such that the vectors are relatively r.m.s. $\epsilon$-embedded in $S$.
\end{definition}

$\underline{N}^{\epsilon}$ is related to the principle component analysis (PCA) of a set of vectors or singular value decomposition (SVD) of the matrix $V = [\bv_1, \bv_2, ... \bv_n] \in \mathbb R^{d \times n}$, which is characterized by the eigen-decomposition of the covariance matrix $A= V^TV \in \mathbb R^{n \times n}$. Let $\lambda_1 \geq \lambda_2 \geq ... \geq \lambda_n \geq 0$ be the eigenvalues of $A$.  Then 
\begin{equation}
\sum_{i=1}^n ||\bv_i||_2^2= tr(A)  = \sum_{i=1}^n \lambda_i  .
\end{equation}
Moreover, the best linear subspace of dimension $l$ in $\mathbb{R}^d$, denoted by $\bar{S_l}$, that approximates the set of vectors $\{ v_i\}_{i=1}^n$ in the least squares sense is the space spanned by the first $l$ left singular vectors of $V$ and satisfies 
\begin{equation}\label{eq:SVD}
\sum_{i=1}^n ||\bv_i - P_{\bar{S_l}}\bv_i ||_2^2  = \sum_{i=l+1}^n \lambda_i
\end{equation}
Combining the above two equations, and the definitions of relatively r.m.s $\epsilon$-embedding \ref{def:eps} and $\underline{N}^{\epsilon}$ \ref{def:N}, we have  
\begin{equation}\label{eq:N}
\underline{N}^{\epsilon}=\min_{N} ~\mbox{such that} ~\frac{\sum_{i=N+1}^n \lambda_i}{\sum_{i=1}^n \lambda_i} \leq \epsilon^2 .
\end{equation} 
Now we give a general lower bound for $\underline{N}^{\epsilon}$. 
\begin{theorem}
\label{th:embedding}  
For a set of vectors $\bv_1, \bv_2, ... \bv_n \in \mathbb R^{d}$.  Let $\underline{N}^{\eps}$ be the least dimension of a linear subspace such that the set of vectors $\{ \bv_i\}_{i=1}^n$ can be relatively r.m.s $\eps$-embedded in that subspace.  Then
\[
\underline{N}^{\epsilon} \geq \frac{(\sum_{i=1}^n ||\bv_i||_2^2)^2 (1- \epsilon^2)^2}{\sum_{i,j=1}^n (\bv_i \cdot \bv_j)  ^2} = \frac{tr(A)^2 (1- \epsilon^2)^2}{||A||_F^2}, 
\]
where $V = [\bv_1, \bv_2, ... \bv_n] \in \mathbb R^{d \times n}$ and $A=V^TV$.
\end{theorem}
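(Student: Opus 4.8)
The plan is to transcribe the argument of Theorem~\ref{th:lb} into the finite-dimensional setting, working directly with the eigenvalues $\lambda_1\ge\cdots\ge\lambda_n\ge 0$ of the covariance matrix $A=V^TV$. The whole proof rests on two pairs of dual identities: $tr(A)=\sum_{i=1}^n\lambda_i=\sum_{i=1}^n\|\bv_i\|_2^2$, and, since $A$ is symmetric, $\|A\|_F^2=tr(A^2)=\sum_{i=1}^n\lambda_i^2=\sum_{i,j=1}^n(\bv_i\cdot\bv_j)^2$. These are exactly what make the two forms of the claimed bound agree, so once I have the inequality in terms of $tr(A)$ and $\sum_i\lambda_i^2$ the vector form follows for free.

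First I would invoke the characterization~\eqref{eq:N}: $\underline{N}^{\epsilon}$ is the smallest $N$ with $\sum_{i=N+1}^n\lambda_i\le\epsilon^2\sum_{i=1}^n\lambda_i$, which rearranges to a lower bound on the retained variance,
\[
\sum_{i=1}^{\underline{N}^{\epsilon}}\lambda_i \;\ge\; (1-\epsilon^2)\sum_{i=1}^n\lambda_i \;=\; (1-\epsilon^2)\,tr(A).
\]
Next I would apply Cauchy--Schwarz to this truncated sum, exactly as in~\eqref{eq:CS},
\[
\Big(\sum_{i=1}^{\underline{N}^{\epsilon}}\lambda_i\Big)^2 \;\le\; \underline{N}^{\epsilon}\sum_{i=1}^{\underline{N}^{\epsilon}}\lambda_i^2 \;\le\; \underline{N}^{\epsilon}\sum_{i=1}^n\lambda_i^2 \;=\; \underline{N}^{\epsilon}\,\|A\|_F^2 .
\]
Chaining the two displays gives $(1-\epsilon^2)^2\,tr(A)^2\le\underline{N}^{\epsilon}\|A\|_F^2$, and dividing by $\|A\|_F^2$ produces the asserted inequality.

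There is no substantial obstacle; the argument is a one-to-one discrete analogue of Theorem~\ref{th:lb}, with the integrals over $D\times D$ replaced by finite sums over the index set. The only point requiring a moment's care is the identity $\sum_i\lambda_i^2=\sum_{i,j}(\bv_i\cdot\bv_j)^2$, that is, that the Frobenius norm read off from the entries of $A$ equals the sum of squared eigenvalues; this is immediate from the symmetry of $A$, which gives $\|A\|_F^2=tr(A^TA)=tr(A^2)$. As in the continuous case, the tightness of the bound hinges on the sharpness of the Cauchy--Schwarz step, so it is essentially sharp precisely when the dominant eigenvalues of $A$ are comparable in magnitude.
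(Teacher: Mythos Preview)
Your proposal is correct and follows essentially the same approach as the paper's proof: use the definition of $\underline{N}^{\epsilon}$ to get $\sum_{i=1}^{\underline{N}^{\epsilon}}\lambda_i\ge(1-\epsilon^2)\,tr(A)$, then apply Cauchy--Schwarz to the truncated sum and bound by $\underline{N}^{\epsilon}\|A\|_F^2$. The paper's proof is slightly terser but the chain of inequalities is identical.
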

\begin{proof}
On one hand,  
\begin{equation}
||A||_F^2=tr(A^TA) = \sum_{i=1}^n \lambda_i^2  \geq \sum_{i=1}^{\underline{N}^\epsilon} \lambda_i^2  \geq \frac{1}{\underline{N}^{\epsilon}} \big( \sum_{i=1}^{\underline{N}^{\epsilon}} \lambda_i \big)^2.
\end{equation}
On the other hand, by the definition of $\underline{N}^{\epsilon}$, we have $\sum_{i=1}^{\underline{N}^{\epsilon}} \lambda_i  \ge  tr(A)(1- \epsilon^2)$. Combining the two we get the result.
\end{proof}

\subsection{Random vectors with i.i.d. entries}
\label{sec:iid}
In the case that the set of random vectors have i.i.d entries, one can derive a precise asymptotic formula for $\underline{N}^{\epsilon}$ using Mar$\check{\text{c}}$enko-Pastur law for random matrices. For completeness, we present Mar$\check{\text{c}}$enko-Pastur law first. 

\begin{theorem}[Mar$\check{\text{c}}$enko-Pastur \label{mp} \cite{marvcenko1967distribution}] Let $V$ be an $d \times n$ random matrix whose entries are i.i.d random variables with mean zero and variance $\sigma^2 < \infty$.  Let $\hat{A} = \frac{1}{d}V^TV$ and let $\hat{\lambda}_1 \geq... \geq \hat{\lambda}_n$ be the eigenvalues of $\hat{A}$.  Finally, consider the random measure $\mu_n(I) = \frac{1}{n} \# \{\hat{\lambda}_j \in I \} , \hspace{3mm} I \subset \mathbb R$.  Assume that $n,d \rightarrow \infty$ so that the ratio $n/d \rightarrow \alpha \in (0, + \infty)$.  Then $\mu_n \rightarrow \mu$ in weak* topology in distribution, where 
\[ \mu(I) =
  \begin{cases}
    (1-\frac{1}{\alpha}){\bf 1}_{0 \in I} + \nu(I),       & \quad \text{if }  \text{ $\alpha > 1$}\\
    \nu(I), & \quad \text{if } \text{$0 \leq \alpha \leq 1$}\\
  \end{cases}
\]

and
$$d\nu(x) = \frac{1}{2 \pi \sigma ^2} \frac{\sqrt{(\hat{\lambda}_+-x)(x-\hat{\lambda}_-)}}{\alpha x} {\bf 1}_{[\lambda_-, \lambda_+]} \hspace{1mm} dx$$

with
$$\hat{\lambda}_{\pm} = \sigma ^2 (1 \pm \sqrt{\alpha})^2.$$ \\
\end{theorem}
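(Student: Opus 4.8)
The plan is to prove this through the Stieltjes transform (resolvent) method, which is the most robust route and yields the explicit density directly. Writing $m_n(z) = \frac{1}{n}\operatorname{tr}(\hat{A} - zI)^{-1}$ for $z\in\mathbb{C}^+$ (the Stieltjes transform of $\mu_n$), and recalling that weak$^*$ convergence of probability measures on $\mathbb{R}$ is equivalent to pointwise convergence of their Stieltjes transforms on $\mathbb{C}^+$, it suffices to show $m_n(z)\to m(z)$ for each such $z$, where $m$ is the Stieltjes transform of the claimed limit $\mu$. A preliminary reduction handles the fact that only a finite second moment is assumed: by truncating and recentering the entries of $V$ at a slowly growing level and invoking the Hoffman--Wielandt and rank inequalities, one shows the limiting spectral distribution is unchanged, so we may assume the entries are bounded with all the higher moments used below.

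First I would pass from the random $m_n(z)$ to its mean. Viewing $\hat A = \frac1d V^T V$ as built from the $n$ columns of $V$, replacing one column is a rank-one perturbation of $V^T V$, under which $\operatorname{tr}(\hat A - z)^{-1}$ moves by at most $1/|\operatorname{Im} z|$ by the resolvent identity and eigenvalue interlacing. A bounded-difference (Azuma) martingale argument along the column filtration then gives $m_n(z) - \mathbb{E}\,m_n(z)\to 0$ almost surely, so only the deterministic limit of $\mathbb{E}\,m_n(z)$ remains.

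The heart of the argument is to close a self-consistent equation for that limit. Using the Schur complement formula for the diagonal entries of $(\hat A - z)^{-1}$ together with a leave-one-column-out decomposition, I would write $(\hat A - z)^{-1}_{ii} = \bigl(\hat A_{ii} - z - r_i^T(\hat A^{(i)} - z)^{-1} r_i\bigr)^{-1}$, where $\hat A^{(i)}$ is the submatrix omitting column $i$ and $r_i = \frac1d (V^{(i)})^T v_i$. Since $v_i$ is independent of $V^{(i)}$ and has i.i.d. entries of variance $\sigma^2$, the quadratic form concentrates around its conditional mean, which one simplifies via the push-through identity to $\sigma^2\alpha\bigl(1 + z\,m(z)\bigr)$ in the limit $n/d\to\alpha$, while $\hat A_{ii}\to\sigma^2$ by the law of large numbers. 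Averaging over $i$ yields the fixed-point relation
\[
m(z) = \frac{1}{\sigma^2(1-\alpha) - z - \sigma^2\alpha z\, m(z)},
\]
a quadratic in $m(z)$ whose coefficients involve only $\alpha$, $\sigma^2$, and $z$. Selecting the branch of the square root with $\operatorname{Im} m(z) > 0$ on $\mathbb{C}^+$ (equivalently $m(z)\sim -1/z$ as $z\to\infty$) pins down $m$ uniquely.

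Finally I would apply the Stieltjes inversion formula $\frac{d\mu}{dx} = \frac{1}{\pi}\lim_{\eta\downarrow 0}\operatorname{Im} m(x+i\eta)$ to read off the absolutely continuous part $\nu$; the zeros of the discriminant $(\sigma^2(1-\alpha)-z)^2 - 4\sigma^2\alpha z$ give exactly the spectral edges $\hat\lambda_{\pm} = \sigma^2(1\pm\sqrt{\alpha})^2$, and the residue of $m$ at $z=0$ produces the atom $(1-\tfrac1\alpha)\delta_0$ precisely when $\alpha>1$, reflecting that $\hat A$ then has at least $n-d$ zero eigenvalues. I expect the main obstacle to be the third step: rigorously establishing the concentration of the quadratic forms and controlling the leave-one-out error terms uniformly enough to close the fixed-point relation is where the i.i.d. hypothesis is genuinely used and where the analytic work concentrates. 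An alternative, more combinatorial route is the method of moments --- computing $\frac1n\mathbb{E}\operatorname{tr}(\hat A^k)$ as a sum over closed walks, showing that only non-crossing, tree-like index configurations survive as $n,d\to\infty$, and matching the resulting counts (Narayana-type numbers) to the moments of $\mu$, together with a Carleman determinacy check --- which trades the complex analysis for a delicate graph-enumeration argument.
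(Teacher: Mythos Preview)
The paper does not prove this theorem: it is quoted as a classical result, attributed to Mar\v{c}enko and Pastur via the citation \cite{marvcenko1967distribution}, and stated without proof (``For completeness, we present Mar\v{c}enko-Pastur law first''). So there is no proof in the paper to compare your proposal against.

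That said, your outline is a correct and standard route to the result. The Stieltjes-transform/resolvent method with a leave-one-out Schur complement, concentration of quadratic forms, and Stieltjes inversion is exactly the argument in, e.g., Bai--Silverstein or Tao's notes, and your identification of the truncation step (to upgrade from a bare second-moment assumption) and of the quadratic-form concentration as the crux is accurate. The alternative moment method you mention is also a genuine proof. Since the paper only \emph{uses} the Mar\v{c}enko--Pastur law as a black box in Theorem~\ref{th:asymptotics} and Corollary~\ref{co:cor1}, any correct proof --- or simply citing the original --- suffices for its purposes.
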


The following results for approximate embedding of a set of random vectors $\{\bv_i\}_{i=1}^n \in \mathbb{R}^d$ based on the Mar$\check{\text{c}}$enko-Pastur law assume i.i.d. entries of $\bv_i$ and are asymptotic as $n, d \rightarrow \infty$ and $\lim_{n\rightarrow \infty} \frac{d}{n}$ exists, i.e. for large $n$ (and $d$).  However, as will be shown by many numerical tests later, the formula is very accurate even for a single realization and quite small $n$ (and $d$).

\begin{theorem}
\label{th:asymptotics}  
For a set of random vectors $\{\bv_i\}_{i=1}^n \in \mathbb{R}^d$ whose entries are i.i.d with mean zero and variance = $\sigma^2 < \infty$. 
Let $V = [\bv_1, \bv_2, ... \bv_n] \in \mathbb R^{d\times n}$ and $\mu$ be the limit distribution of the eigenvalues of $\hat{A} = \frac{1}{d}V^TV$. 
Then the least dimension of a linear subspace, $\underline{N}^{\eps}$,  such that the vectors $\{ \bv_i\}_{i=1}^n$ can be relatively r.m.s. $\eps$-embedded in, has the following asymptotic formula:
\begin{equation}
\label{eq:Neps}
\frac{\underline{N}^{\epsilon}}{n} \rightarrow \int_{y}^{\hat{\lambda}_+} d\mu(x), ~ \mbox{as } n \rightarrow \infty, \quad \mbox{where } y ~\mbox{is such that } \int_{\hat{\lambda}_-}^{y} x d\mu(x) =  \sigma^2 \epsilon^2.
\end{equation}
\end{theorem}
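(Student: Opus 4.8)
The plan is to reduce the embedding problem to a statement about the empirical spectral distribution of $\hat{A}=\frac1d V^TV$ and then invoke the Mar\v{c}enko--Pastur law. First I would note that the eigenvalues of $A=V^TV$ and of $\hat A$ differ only by the factor $d$, i.e.\ $\lambda_i=d\hat\lambda_i$, so the scale-invariant characterization \eqref{eq:N} of $\underline{N}^{\epsilon}$ may be rewritten purely in terms of the $\hat\lambda_i$:
\[
\underline{N}^{\epsilon}=\min\Big\{N:\sum_{i=N+1}^{n}\hat\lambda_i\le \epsilon^2\sum_{i=1}^{n}\hat\lambda_i\Big\}.
\]
I introduce the empirical spectral measure $\mu_n=\frac1n\sum_{i=1}^n\delta_{\hat\lambda_i}$, its distribution function $F_n(t)=\mu_n([0,t])$, and the truncated first moment $G_n(t)=\int_0^t x\,d\mu_n(x)=\frac1n\sum_{i:\hat\lambda_i\le t}\hat\lambda_i$. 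Discarding exactly the eigenvalues not exceeding a threshold $t$ keeps $n(1-F_n(t))$ of them and removes the mass $G_n(t)$, so the definition above gives $\underline{N}^{\epsilon}/n=1-F_n(t_n)+O(1/n)$, where $t_n$ is the threshold at which the discarded fraction $G_n(t)/G_n(\infty)$ reaches $\epsilon^2$; the $O(1/n)$ accounts for the difference between an integer count and a threshold, which is negligible since consecutive eigenvalues are almost surely distinct.

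Next I would pass to the limit. The total first moment satisfies $G_n(\infty)=\frac1n\operatorname{tr}\hat A=\frac1{nd}\|V\|_F^2\to\sigma^2$ by the law of large numbers, which identifies $\int x\,d\mu(x)=\sigma^2$ (consistent with a direct computation from the Mar\v{c}enko--Pastur density). By Theorem \ref{mp}, $\mu_n\to\mu$ weakly, so $F_n(t)\to F(t):=\mu([0,t])$ and $G_n(t)\to G(t):=\int_0^t x\,d\mu(x)$ at every continuity point $t$ of $\mu$. Because $G$ is continuous and strictly increasing on $[\hat\lambda_-,\hat\lambda_+]$ with $G(\hat\lambda_-)=0$ and $G(\hat\lambda_+)=\sigma^2$, and since $0<\epsilon^2\sigma^2<\sigma^2$, the equation $G(y)=\epsilon^2\sigma^2$ — equivalently $\int_{\hat\lambda_-}^{y}x\,d\mu(x)=\sigma^2\epsilon^2$ — has a unique solution $y$ in the open interval $(\hat\lambda_-,\hat\lambda_+)$, which is automatically a continuity point of $\mu$. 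Monotonicity and continuity of $G$ at $y$ then force $t_n\to y$, whence
\[
\frac{\underline{N}^{\epsilon}}{n}=1-F_n(t_n)+O(1/n)\longrightarrow 1-F(y)=\int_{y}^{\hat\lambda_+}d\mu(x),
\]
which is the claimed formula.

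The main obstacle is the rigorous justification of $G_n(t)\to G(t)$, since the test function $x\mapsto x\,\mathbf 1_{[0,t]}(x)$ is both unbounded a priori and discontinuous at $x=t$, so weak convergence of $\mu_n$ does not apply directly. I would resolve this in two steps. For the discontinuity, I evaluate at the interior point $y$, where $\mu$ has a continuous density and hence no atom; then $x\,\mathbf 1_{[0,y]}$ is continuous $\mu$-almost everywhere and the portmanteau theorem for bounded, a.e.-continuous test functions applies. For the unboundedness, I invoke the convergence of the largest eigenvalue $\hat\lambda_1\to\hat\lambda_+$ (Bai--Yin), so that with probability tending to one all eigenvalues lie in a fixed compact set and truncating $x$ to that set is harmless in the limit; alternatively, one upgrades weak convergence to convergence of the truncated moments via the already-established convergence of the full first moment $G_n(\infty)\to\sigma^2$ by a uniform-integrability argument. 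The remaining minor point, the passage from the threshold description back to the integer count $\underline{N}^{\epsilon}$, contributes only an $O(1/n)$ error and so does not affect the limit.
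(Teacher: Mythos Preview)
Your proposal is correct and follows essentially the same route as the paper: reduce to the eigenvalues of $\hat A=\frac1d V^TV$, use the law of large numbers for $\frac1n\operatorname{tr}\hat A\to\sigma^2$, and then invoke the Mar\v{c}enko--Pastur law to convert the tail sum and the count into the integrals $\int_{\hat\lambda_-}^{y}x\,d\mu(x)$ and $\int_y^{\hat\lambda_+}d\mu(x)$. The paper's proof is in fact a terse two-line version of exactly this argument; your added care about the threshold $t_n$, the a.e.\ continuity of $x\mathbf 1_{[0,y]}$, and the boundedness via Bai--Yin simply fills in technical steps that the paper leaves implicit.
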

\begin{proof}
Let $\hat{\lambda}_1 \geq \hat{\lambda}_2 \geq ... \geq \hat{\lambda}_n \geq 0$ be the eigenvalues of $\hat{A}$. Then
\[
\sum_{i=1}^n \hat{\lambda}_i  = tr(\hat{A}) = \sum_{i=1}^n \frac{1}{d}(\bv_i \cdot  \bv_i) \rightarrow n \sigma^2, \quad \mbox{as } n, d \rightarrow \infty.
\]
By definition, $\underline{N}^{\eps}$ is the smallest integer such  $\sum_{i=\underline{N}^{\epsilon}+1}^n \hat{\lambda}_i \leq  n \sigma^2 \epsilon^2 $.

Let $\mu$ be the limit measure for $\hat{A}$ as in the Mar$\check{\text{c}}$enko-Pastur law. We have that $\frac{1}{n} \sum_{i=\underline{N}^{\epsilon}+1}^n \hat{\lambda}_i \rightarrow \int_{\hat{\lambda}_-}^y x d\mu(x)$ and $\frac{\underline{N}^{\epsilon}}{n} \rightarrow \int_y^{\hat{\lambda}_+} d\mu(x)$.

\end{proof}

\begin{remark}
\label{re:rank}
Let $R^{\epsilon}$ be the the largest integer such that $\sqrt{\lambda_{R^{\epsilon}}}\ge \epsilon$ (the standard $\epsilon$ rank approximation of $V$). Under the same conditions in Theorem \ref{th:asymptotics} and use Mar$\check{\text{c}}$enko-Pastur law we have
\[
\frac{n-R^{\epsilon}+1}{n}\rightarrow \int_{\hat{\lambda}_-}^{\frac{\epsilon^2}{d}} d\mu(x) \quad \mbox{or} \quad \frac{R^{\epsilon}}{n}\rightarrow 1-\int_{\hat{\lambda}_-}^{\frac{\epsilon^2}{d}} d\mu(x) \quad \mbox{as } n\rightarrow \infty.
\]
\end{remark}

\begin{remark}
\label{re:derivative}
The asymptotic formula in Theorem \ref{th:asymptotics} says that, for any fixed tolerance $\epsilon>0$ in relative r.m.s sense, the dimension, $\underline{N}^{\epsilon}$, of the best linear subspace that can approximately embed a set of $n$ random vectors in $\mathbb{R}^d$ ($d=O(n)$) with i.i.d entries is proportional to $n$ for $n\gg 1$. Let's denote the ratio, $\rho(\epsilon)=\frac{\underline{N}^{\epsilon}}{n}$ as a function of $\epsilon$, 
one can compute the rate of change of $\rho(\epsilon)$ with respect to $\epsilon$. From \eqref{eq:Neps} we have
\begin{equation}
\frac{d\rho(\epsilon)}{d\epsilon}=- \frac{\sqrt{(\hat{\lambda}_+ - y)(y- \hat{\lambda}_-)}}{2  \pi \sigma^2 \alpha y}  \frac{dy}{d \epsilon}=-\frac{2 \sigma^2 \epsilon}{y} , \quad \epsilon \in (0, 1),
\end{equation}
where relation $\int_{\hat{\lambda}_-}^{y} x d\mu(x) =  \sigma^2 \epsilon^2$ is used for the last equality. Let $\epsilon\rightarrow 0+$, which implies $y\rightarrow \hat{\lambda}_-$, then 
\[
\frac{d \rho(\epsilon)}{d\epsilon} = \left\{
\begin{array}{ll}
O( \epsilon), & \mbox{if } \lim_{\rightarrow\infty}\frac{n}{d}=\alpha\ne 1
\\
O(\epsilon^{-\frac{1}{3}}) &  \mbox{if } \lim_{n\rightarrow\infty}\frac{n}{d}=\alpha = 1
\end{array}\right.
\quad \mbox{as } \epsilon \rightarrow 0^+
\]
For $\alpha=1$, we use the fact that 
\[
\frac{dy}{d \epsilon} = \frac{4 \pi \sigma^4  \epsilon}{\sqrt{y(\hat{\lambda}_+-y)} }.
\]
Figure \ref{fig:iid} (d) shows numerical plots of $\rho(\epsilon)$ for different $\alpha=\frac{n}{d}$, and we see the singular behavior of the slope of $\rho(\epsilon)$ for $\alpha=1$.
\end{remark}

We can also ask the dual question: if one projects a set of vectors onto the best $k$-dimensional linear subspace, at least how much "error" in the relative r.m.s sense does one have to make? 

\begin{corollary}\label{co:cor1}   Given vectors $\{ \bv_i\}_{i=1}^n \in \mathbb R^d$ and a fixed $k$, $k<d$.  The relative r.m.s. error for the best $k$ dimensional linear subspace is asymptotically given by $\sqrt{\frac{1}{\sigma^2} \int_{\hat{\lambda}_-}^y x d\mu(x) }$, where $y$ satisfies $\int_{\hat{\lambda}_-}^y d\mu(x) = \frac{n-k}{n}$. 
\end{corollary}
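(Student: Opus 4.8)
The plan is to prove this corollary as the exact dual of Theorem \ref{th:asymptotics}, interchanging the roles of the prescribed dimension and the resulting error, and once more reading the limit off the Mar$\check{\text{c}}$enko--Pastur law (Theorem \ref{mp}). First I would express the relative r.m.s.\ error of the best $k$-dimensional subspace purely in terms of the spectrum of $\hat{A}=\frac{1}{d}V^TV$. By the optimality of the SVD~\eqref{eq:SVD} together with $\sum_{i=1}^n\|\bv_i\|_2^2=\operatorname{tr}(A)=\sum_i\lambda_i$, and since $A=d\hat{A}$ so that $\lambda_i=d\hat\lambda_i$ and the common factor $d$ cancels in the ratio, the squared error equals
\[
\frac{\sum_{i=k+1}^n\lambda_i}{\sum_{i=1}^n\lambda_i}
=\frac{\sum_{i=k+1}^n\hat\lambda_i}{\sum_{i=1}^n\hat\lambda_i}
=\frac{\tfrac{1}{n}\sum_{i=k+1}^n\hat\lambda_i}{\tfrac{1}{n}\sum_{i=1}^n\hat\lambda_i}.
\]

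Next I would pass to the limit in numerator and denominator separately. For the denominator, exactly as in the proof of Theorem \ref{th:asymptotics}, $\frac{1}{n}\operatorname{tr}(\hat A)=\frac{1}{nd}\sum_{i=1}^n(\bv_i\cdot\bv_i)\to\sigma^2$ by the law of large numbers. For the numerator the sum runs over the $n-k$ smallest eigenvalues, so I introduce the threshold $y$ below which exactly this fraction of the spectrum accumulates, i.e.\ the $y$ determined by $\int_{\hat\lambda_-}^{y}d\mu(x)=\frac{n-k}{n}$; this is precisely the threshold in the statement and is the inverse of the relation $\frac{k}{n}\to\int_{y}^{\hat\lambda_+}d\mu(x)$ used in Theorem \ref{th:asymptotics}. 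With this choice the empirical distribution concentrates its lowest $(n-k)/n$ mass in $[\hat\lambda_-,y]$, and the weak* convergence $\mu_n\to\mu$ gives $\frac{1}{n}\sum_{i=k+1}^n\hat\lambda_i\to\int_{\hat\lambda_-}^{y}x\,d\mu(x)$. Dividing the two limits and taking the square root yields $\sqrt{\frac{1}{\sigma^2}\int_{\hat\lambda_-}^{y}x\,d\mu(x)}$, as claimed.

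The one point demanding care---and the main obstacle---is justifying the convergence of the \emph{partial} eigenvalue sum to the truncated integral $\int_{\hat\lambda_-}^{y}x\,d\mu(x)$. The Mar$\check{\text{c}}$enko--Pastur theorem supplies weak* convergence of $\mu_n$, which directly controls integrals of bounded \emph{continuous} test functions, whereas the relevant integrand here is $x\mapsto x\,\mathbf{1}_{[\hat\lambda_-,y]}(x)$, bounded but discontinuous at $y$. Since $y$ lies in the interior of the support where the density $d\nu/dx$ is strictly positive and continuous, the point $\{y\}$ carries no $\mu$-mass; hence the integrand's discontinuity set is $\mu$-null and weak* convergence still applies to it (equivalently, one sandwiches $\mathbf{1}_{[\hat\lambda_-,y]}$ between continuous functions and lets them pinch). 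I would also note that for a truly fixed $k$ one has $\frac{n-k}{n}\to1$, forcing $y\to\hat\lambda_+$ and a vanishing error, so the formula is substantive precisely in the regime where $k$ grows proportionally with $n$, for which $\frac{n-k}{n}$ and the threshold $y$ stabilize.
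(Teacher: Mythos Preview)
Your proof is correct and follows essentially the same route as the paper's: express the squared relative error as the ratio $\frac{\sum_{i>k}\hat\lambda_i}{\sum_i\hat\lambda_i}$, send the denominator to $\sigma^2$ via the trace, and read the numerator off the Mar\v{c}enko--Pastur limit via the threshold $y$ determined by $\int_{\hat\lambda_-}^{y}d\mu=\frac{n-k}{n}$. Your added remarks---the justification for integrating against the discontinuous truncation and the observation that the formula is only substantive when $k$ scales with $n$---are valid refinements that the paper leaves implicit.
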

\begin{proof}
Let $y = \hat{\lambda}_{k+1}$ be the value of the $(k+1)^\text{th}$ largest eigenvalue of $\hat{A} = \frac{1}{d}V^TV$, where $V = [\bv_1, \bv_2, ... \bv_n] \in \mathbb R^{d\times n}$.  By the Mar$\check{\text{c}}$enko-Pastur law, $\frac{n-k}{n}\rightarrow \int_{\hat{\lambda}_-}^y d\mu(x) $ and 
$\frac{1}{n} \sum_{i=k+1}^n \hat{\lambda}_i \rightarrow \int_{\hat{\lambda}_-}^{y} x d\mu(x)$. Since $\frac{1}{n} \sum_{i=1}^n \hat{\lambda}_i \rightarrow \sigma^2$, we have 
\[
\frac{ \sum_{i=k+1}^n \lambda_i}{ \sum_{i=1}^n \lambda_i}=\frac{ \sum_{i=k+1}^n \hat{\lambda}_i}{ \sum_{i=1}^n \hat{\lambda}_i}=
\frac{1}{\sigma^2} \int_{\hat{\lambda}_-}^y x d\mu(x) 
\]

\end{proof}

\begin{remark}
In practice,  even if one does not have the knowledge of the probability distribution for a set of vectors, as long as the entries are approximately i.i.d, one can compute the empirical mean and variance from the data and use them to estimate $\underline{N}^{\epsilon}$ from the above formulas.
\end{remark}

\subsection{Random vectors with a covariance structure}
\label{sec:correlated}
In this section, we study the scaling law of $\underline{N}^{\epsilon}$ for a set of random vectors $\bv_i \in \mathbb{R}^d, i=1, 2, \ldots, n$,
$[\bv_1, \bv_2,\dots, \bv_n]=V=XL^T$, where $X$ is a random matrix whose entries are i.i.d with mean $0$ and variance $1$, and $L$ is the Cholesky decomposition of a covariance matrix $C=LL^T$.
 If the eigenvalues 
of the of the covariance matrix $C$ have a limit distribution as $n \rightarrow \infty$,  one can find the empirical distribution of the eigenvalues of $V^TV$ using the generalized Mar$\check{\text{c}}$enko-Pastur law \cite{silverstein1995empirical, silverstein1995strong} through an integral equation that relates the Stieltjes transforms of the two limit distributions, which will provide the explicit asymptotic scaling law for $\frac{\underline{N}^{\epsilon}}{n}$ as in Theorem \ref{th:asymptotics}
Below we will show a complete calculation for the eigenvalues of covariance matrix of the following form: $C_{i,j}=cov(\bv_i,\bv_j)=\exp(-\frac{|i-j|}{\sigma})$ and give the explicit asymptotic scaling law for $\frac{\underline{N}^{\epsilon}}{n}$ for the covariance matrix. 


Suppose $(\lambda_k, \be_k), k=1, 2, \dots n$ are the ordered eigenvalues and eigenvectors for the matrix $C_{i,j}=\exp(-\frac{|i-j|}{\sigma})$. Let $\tau=\frac{1}{\sigma}$ and $\be_k=[e_1^k, e_2^k, \ldots, e_{n}^k]^T$, then 
\begin{equation}
\sum_{j=1}^n \exp(-\tau|i-j|)e_j^k=\lambda_k e_i^k ~~ \Rightarrow ~~ e^k_{j+1}-2(\cosh \tau-\frac{\sinh\tau}{\lambda_k} )e^k_j+e^k_{j-1}=0,  ~1<j<n.
\end{equation}
Such recursive relation implies that $e^k_j$ can be represented by (with scaling)
	\begin{equation}
	e^k_j= \cos(j \theta_k + \psi_k)
	\end{equation}
for some $\theta_k$ and $\psi_k$, where $\theta_k\in(0, \pi)$ satisfies 
	\begin{equation}\label{eq:k}
	\cos\theta_k = \cosh \tau - \frac{1}{\lambda_k}\sinh\tau
	\end{equation}
On the other hand, for the top and bottom element we get the following two boundary conditions:
	\begin{equation}\label{eq:tau}
	\begin{aligned}
	(\exp\tau - \frac{2}{\lambda_k}\sinh\tau) \cos(\psi_k + \theta_k) &= \cos(\psi_k + 2\theta_k),\\
	(\exp\tau - \frac{2}{\lambda_k}\sinh\tau) \cos(\psi_k + n\theta_k) &= \cos(\psi_k + (n-1)\theta_k).
	\end{aligned}
	\end{equation}
Use equation~\eqref{eq:k} in~\eqref{eq:tau}, we obtain the relation between $\psi_k$ and $\theta_k$,
	\begin{equation}\label{eq:psi}
	\tan(\psi_k) = \frac{\cos\theta_k - \exp\tau }{\sin\theta_k}.
	\end{equation}
Also~\eqref{eq:tau} and $\eqref{eq:psi}$ imply that $\psi_k \in (-\frac{\pi}{2}, 0)$ and 
\begin{equation}
\label{eq:relation}
2\psi_k + (n+1)\theta_k = (k-1)\pi,~~~1\le k \le n
\end{equation}
 More specifically, we have the following equations for $\theta_k$, 
	\begin{equation}
	\label{eq:theta}
(n+1)\theta_k + 2\arctan\left(\frac{\cos\theta_k - \exp\tau}{\sin\theta_k}\right) = (k-1)\pi, ~~ \theta_k \in \left(\frac{(k-1)\pi}{(n+1)}, \frac{k\pi}{(n+1)}\right).
	\end{equation}
Actually, one can derive a better range for $\theta_k$. Let us denote $\eta_k = (n+1)\theta_k - (k-1)\pi$. Equation \eqref{eq:relation} can be transformed to
	\begin{equation}
	\tan\left(\frac{\eta_k}{2}\right) = \tan\left(\frac{\theta_k}{2}\right) + \frac{\exp(\tau) - 1}{\sin\theta_k}> \tan\left(\frac{\theta_k}{2}\right) \quad \eta_k > \theta_k.
	\end{equation}
Hence we have the following distribution for $\theta_k$
\begin{equation}
 \theta_k \in (\frac{(k-1)\pi}{n}, \frac{k\pi}{n+1}).
\end{equation}
Once $\theta_k$ is solved from \eqref{eq:theta}, one can compute the corresponding eigenvalues:
\begin{equation}
\label{eq:e-value}
\lambda_k = \frac{\sinh\tau}{\cosh\tau - \cos(\theta_k)}.
\end{equation}

From the formula \eqref{eq:e-value} for $\lambda_k$ and the distribution \eqref{eq:theta} for $\theta_k$, we can derive the exact asymptotic formula for $N^{\epsilon}/n$. Since 
\[
\frac{\sinh\tau}{\cosh \tau - \cos(\frac{k-1}{n}\pi)}\le \lambda_k \le \frac{\sinh\tau}{\cosh \tau - \cos(\frac{k}{n+1}\pi)},
\]

we have, as $n \rightarrow \infty$,
\begin{equation}
\frac{1}{n}\sum_{k=1}^{n} \lambda_k\rightarrow \frac{1}{\pi} \int_{0}^{\pi}\frac{\sinh \tau}{\cosh\tau - \cos x} dx 
\end{equation}
and 
\begin{equation}
\frac{1}{n}\sum_{k=\underline{N}^{\epsilon}}^{n} \lambda_k\rightarrow \frac{1}{\pi} \int_{\frac{\pi\underline{N}^{\epsilon}}{n}}^{\pi}\frac{\sinh \tau}{\cosh\tau - \cos x} dx.
\end{equation}
Since $\underline{N}^{\epsilon}$ satisfies 
	\begin{equation}
	\sum_{k=\underline{N}^{\epsilon}+1}^{n} \lambda_k	=  \epsilon^2\left(\sum_{k=1}^{n} \lambda_k \right),
	\end{equation}
	it is to find out $t = \frac{\underline{N}^{\epsilon}}{n}$ such that
	\begin{equation}
	\int_{t\pi}^{\pi}\frac{\sinh \tau}{\cosh\tau - \cos x} dx = \epsilon^2 	\int_{0}^{\pi}\frac{\sinh \tau}{\cosh\tau - \cos x} dx.
	\end{equation}
	Then we solve $t$ and get
	\begin{equation}
	\label{eq:t}
	 \frac{\underline{N}^{\epsilon}}{n} \rightarrow t = \frac{2}{\pi} \arctan\left(\tanh\left(\frac{\tau}{2}\right) \tan\left(\frac{\pi}{2}(1 - \epsilon^2)\right)\right) \quad \mbox{as } n\rightarrow \infty.
	\end{equation}

\subsection{Experiments} 
We present a few numerical experiments to verify our asymptotic formula for $\underline{N}^{\epsilon}$ derived in the previous sections for different scenarios. In all our tests, numerical results are computed from random vectors generated by one realization. No ensemble average is performed.

{\bf Example: random vectors with i.i.d entries.} Figure \ref{fig:iid}(a), (b) plots the asymptotic formula of $\underline{N}^{\epsilon}$ in terms of the total number of random vectors $n$ (solid line) vs. the numerically computed $\underline{N}^{\epsilon}$ for random vectors with i.i.d. Gaussian entries and i.i.d Bernoulli respectively, all with mean 0 and variance 1. In our tests, we set $n=\frac{d}{4}$ and show results for different $\epsilon$. We see remarkable agreements between our asymptotic estimate in Theorem \ref{th:asymptotics} and the numerical result even for a quite small number of random vectors in one realization. Figure \ref{fig:iid}(c) plots the asymptotic formula of $R^{\epsilon}$ ($\epsilon$ rank) in Remark \ref{re:rank} (solid line) vs. the numerically computed results for different $\epsilon$ for random vectors with i.i.d. Gaussian entries with mean 0 and variance 1. Figure \ref{fig:iid}(d) plots the ratio $\rho(\epsilon)=\frac{\underline{N}^{\epsilon}}{n}$ as a function of $\epsilon$ for random vectors with i.i.d. Gaussian entries with mean 0 and variance 1. We can see the singularity of $\left.\frac{d\rho(\epsilon)}{d\epsilon}\right|_{\epsilon=0}$ when $\frac{n}{d}=1$.
\begin{figure}[htb]
\begin{center}
\begin{tabular}{cc}
\includegraphics[width=0.55\hsize]{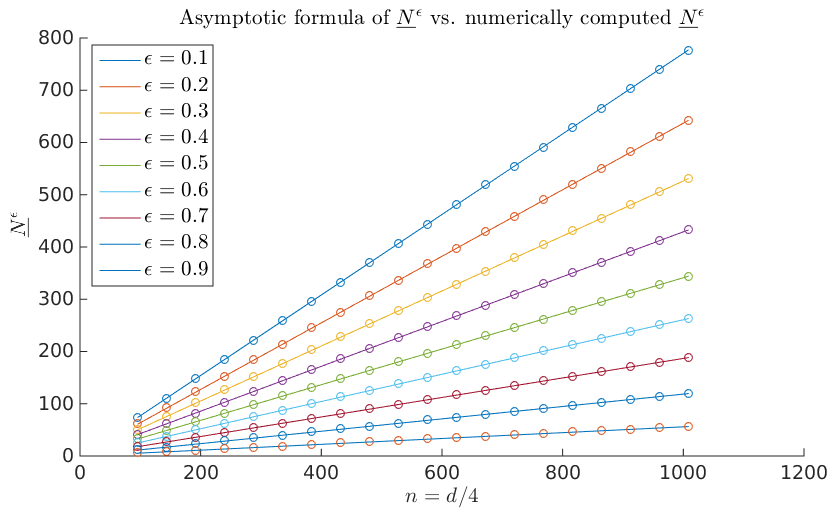} & \includegraphics[width=0.55\hsize]{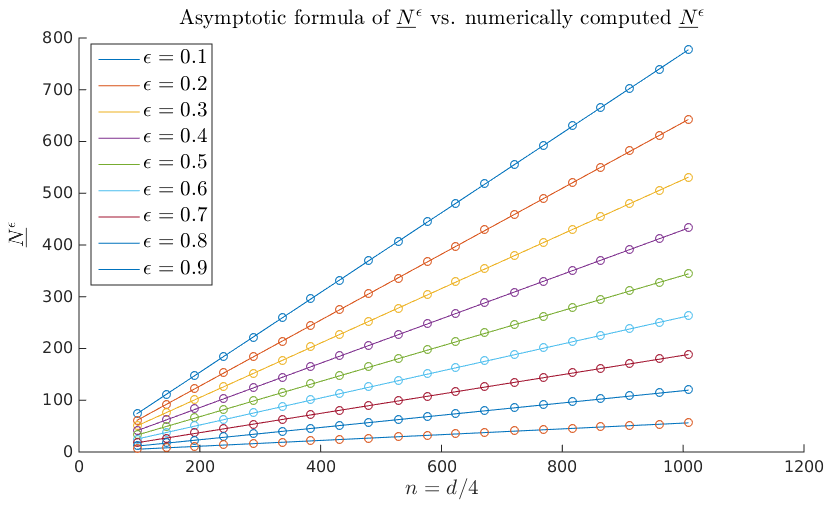}
\\
(a) random vectors with Gaussian i.i.d entries & (b) random vectors with Bernoulli i.i.d entries
\\
\includegraphics[width=0.55\hsize]{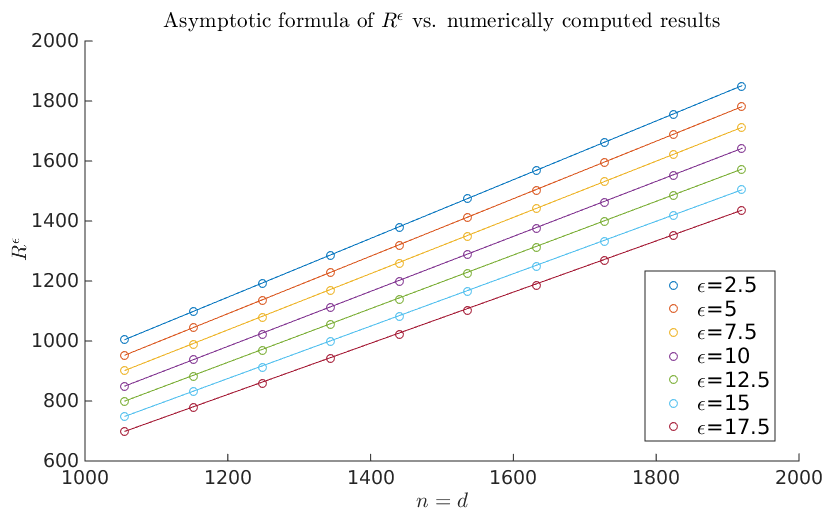}  & \includegraphics[width=0.55\hsize]{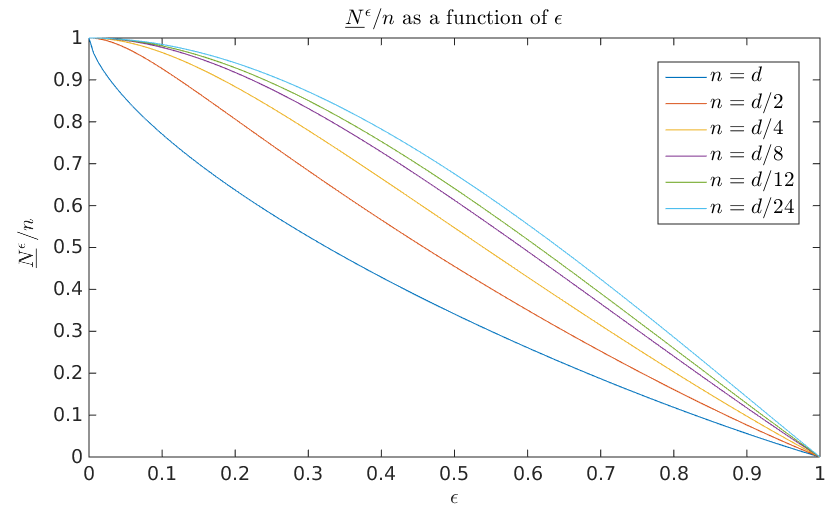} 
\\
(c) $R^{\epsilon}$ & (d) $\rho(\epsilon)$
\end{tabular}
\end{center}
\caption{}
\label{fig:iid}
\end{figure}

{\bf Example: random vectors with given covariance.} In this test, we first generate random vectors with covariance matrix $C_{i,j}=\exp(-\frac{|i-j|}{\sigma})$ by $V=XL^T$ where $X$ is a Gaussian matrix and $C=LL^T$. Figure \ref{fig:correlated} (a) plots  $\underline{N}^{\epsilon}$ computed from our asymptotic formula \eqref{eq:t} vs. numerically computed one from $C$. Figure \ref{fig:correlated} (b) plots $\underline{N}^{\epsilon}$ vs. $\frac{1}{\sigma}$ for $C$.  As we can see from the plot, there is a linear scaling regime for $\underline{N}^{\epsilon}$ vs. $\frac{1}{\sigma}$ when $\sigma$ is large compared to 1. In this regime, the behavior is analogous to the one of a 1D random field with a similar covariance function and correlation length $\sigma$ for which the intrinsic complexity or degrees of freedom is $O(\frac{1}{\sigma})$. However, when $\sigma$ gets close to 1 and smaller, the entries of random matrix $V$ becomes almost i.i.d. Hence $\frac{\underline{N}^{\epsilon}}{n}$ should converges to 
the asymptotic estimate in Theorem \ref{th:asymptotics} as $\sigma\rightarrow 0$. This is shown in Figure \ref{fig:convergence}.  Figure \ref{fig:correlated} (c), (d) shows similar numerical tests for random vectors with covariance matrix $C_{i,j}=\exp(-\frac{|i-j|^2}{\sigma^2})$, although we do not have a analytical solution to compare. However, the numerical evidence suggests that $\rho(\epsilon)=\lim_{n\rightarrow \infty}\frac{\uNe}{n}$  does exist. From the relation stated in Theorem \ref{th:asymptotics}, one may deduce the limit distribution of the eigenvalues. 

\begin{remark}
We design the following iterative method to solve ~\eqref{eq:theta} for $\theta_k$ first. Let $\theta^0_k = \frac{(k-1)\pi}{n}$, then solving the following equation a few times will achieve sufficient accuracy,
	\begin{equation}
	(n+1)\theta^{n}_k + 2\arctan\left(\frac{\cos\theta^{n-1}_k - \exp\tau}{\sin\theta^{n-1}_k}\right) = (k-1)\pi,~~~1\le k\le n
	\end{equation}
	Then eigenvalue $\lambda_k$ is computed through
\[
	\lambda_k = \frac{\sinh\tau}{\cosh\tau - \cos(\theta_k)}.
\]
Actually our iterative method is much faster than using $\text{eig}(A)$ from $\text{MATLAB}$. 
\end{remark}

\begin{figure}[htb]
\begin{center}
\begin{tabular}{cc}
\multicolumn{2}{c}{for covariance matrix $C_{i,j}=\exp(-\frac{|i-j|}{\sigma})$}
\\
\includegraphics[width=0.5\hsize]{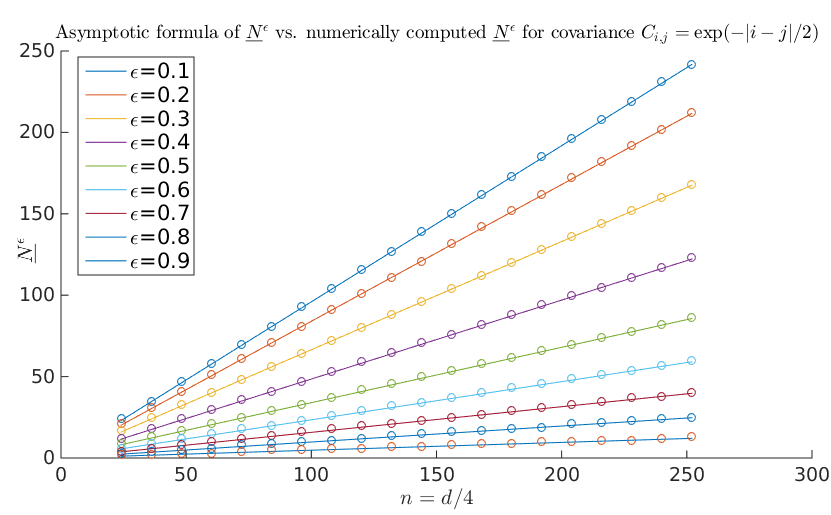} & \includegraphics[width=0.5\hsize]{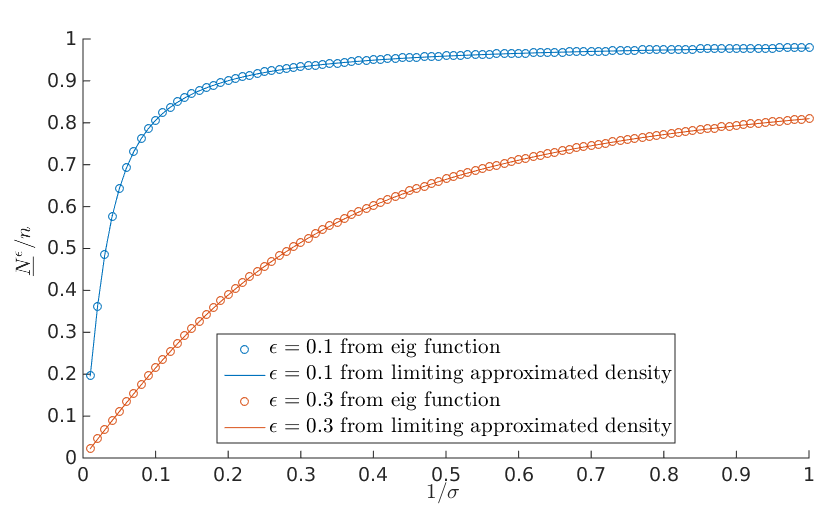}
\\
(a)   & (b) 
\\
\multicolumn{2}{c}{for covariance matrix $C_{i,j}=\exp(-\frac{(i-j)^2}{2\sigma^2})$}
\\
\includegraphics[width=0.5\hsize]{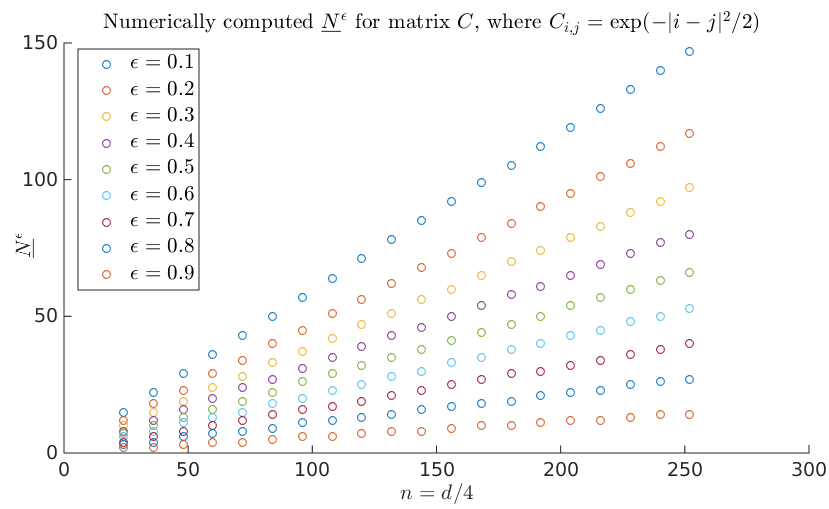} & \includegraphics[width=0.5\hsize]{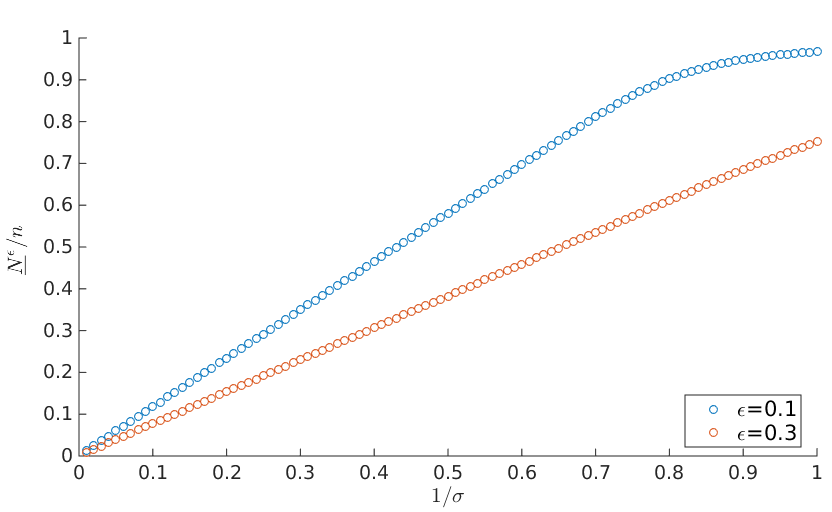}
\\
(c)   & (d) 
\end{tabular}
\end{center}
\caption{}
\label{fig:correlated}
\end{figure}
	
\begin{figure}[htb]
\includegraphics[width=0.5\hsize]{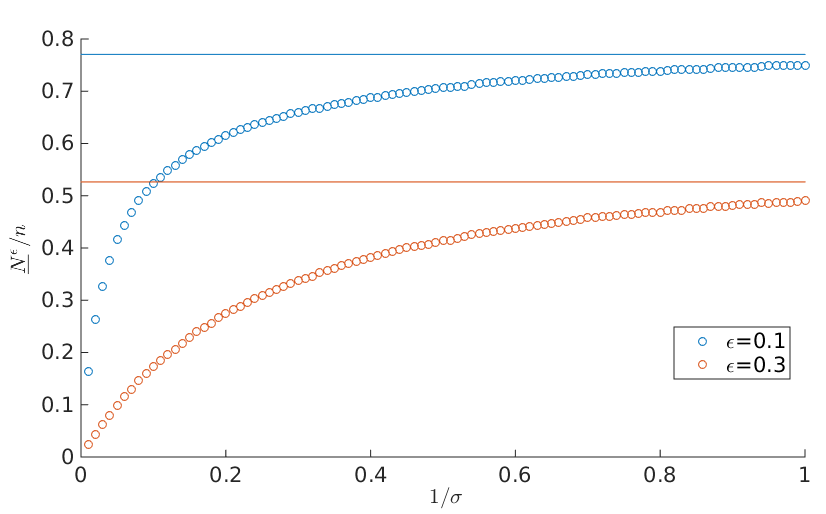}	
\caption{}
\label{fig:convergence}
\end{figure}

{\bf Acknowledgement} H. Zhao would like to thank Andrew Stuart for referring the work \cite{schwab2006karhunen} and helpful discussions. The research of H. Zhao is partially supported by NSF grant DMS-1418422 and DMS-1622490. The research of J. Bryson is partially supported by NSF Graduate Research Fellowship Program DGE-1321846.

\bibliographystyle{siam}
\bibliography{scaling}

\end{document}